\theoremstyle{definition}
\newtheorem{theorem}{Theorem}
\newtheorem{lemma}{Lemma}
\newtheorem{corollary}{Corollary}
\newtheorem{problem}{Problem}
\theoremstyle{remark}
\newcommand{\R}{\mathbb{R}}
\newcommand{\E}{\mathbb{E}}
\newcommand{\Prob}{\mbox{Prob}}
\newcommand{\bmtx}{\begin{bmatrix}}
\newcommand{\emtx}{\end{bmatrix}}
\newcommand{\bsmtx}{\left[\begin{smallmatrix}}
\newcommand{\esmtx}{\end{smallmatrix}\right]}
\title{Stochastic LQR Design With Disturbance Preview}
\author{\IEEEauthorblockN{Jietian Liu}
\IEEEauthorblockA{\textit{Electrical Eng. and Computer Science} \\
\textit{University of Michigan}\\
jietian@umich.edu}
\and
\IEEEauthorblockN{Laurent Lessard}
\IEEEauthorblockA{\textit{Mechanical and Industrial Eng.} \\
\textit{Northeastern University}\\
l.lessard@northeastern.edu}
\and
\IEEEauthorblockN{Peter Seiler}
\IEEEauthorblockA{\textit{Electrical Eng. and Computer Science} \\
\textit{University of Michigan}\\
pseiler@umich.edu}
}
\date{September 2024}
\begin{document}

\maketitle

\begin{abstract}
This paper considers the discrete-time, stochastic LQR problem with $p$ steps of disturbance preview information where $p$ is finite.  We first  derive the solution for this problem on a finite horizon with linear, time-varying dynamics and time-varying costs.  Next, we derive the solution on the infinite horizon with linear, time-invariant dynamics and time-invariant costs.  Our proofs rely on the well-known principle of optimality.  We provide an independent proof for the principle of optimality that relies only on nested information structure.  Finally, we show that the finite preview controller converges to the optimal noncausal controller as the preview horizon $p$ tends to infinity. We also provide a simple example to illustrate both the finite and infinite horizon results.

\end{abstract}


\begin{IEEEkeywords}
Stochastic LQR, discrete time, disturbance preview
\end{IEEEkeywords}

\section{Introduction}

This paper considers control design for a discrete-time, stochastic Linear Quadratic Regulator (LQR) problem with finite preview information of the disturbance. We consider
both the finite and infinite horizon problems.  Our specific formulation is motivated by recent work on regret-based control \cite{sabag21ACC,sabag21arXiv,sabag22arXiv,goel20arXiv,goel22CDC,goel22TAC}. These regret-based formulations often use a controller with full knowledge of the disturbance as a baseline for performance.  
Then they show that a causal, time-varying controller, updated with online optimization, can recover this baseline performance.  We show that the optimal stochastic LQR controller with finite preview can also recover this baseline performance as preview goes to infinity. Thus the performance of the noncausal controller can be recovered either via time-varying control with online optimization or using the stochastic LQR controller with an additional sensor to provide disturbance preview.

Recent work on regret-based control has also studied finite disturbance preview, e.g., in predictive or MPC-based settings \cite{Zhang2021,Lin2021}. These works typically 
consider bounded disturbances and derive non-asymptotic regret bounds. In contrast, we adopt a stochastic formulation with i.i.d.\ disturbances. Our preview controller explicitly depends on the previewed disturbance realizations while optimizing the expected cost associated with post-preview disturbances. Rather than deriving regret bounds, we establish asymptotic convergence of the finite-preview LQR controller to the corresponding noncausal optimal controller as the preview increases. 

There is a large literature on finite preview control for stochastic LQR/LQG and its closely related $H_2$ formulation. These results exist in both continuous-time
\cite{MoeljaMeinsma2006H2Control, MarroZattoni2005H2OptimalRejection, SentouhEtAl2011TheH2OptimalPreviewController, Tomizuka1975OptimalContinuousFinitePreviewProblem, Kojima1999LQPreviewSynthesis,LINDQUIST1968OnOptimalStochasticControlWithSmoothedInformation, Hac1992OptimalLinearPreviewControl, Peng1991optimal, YOSHIMURA1993AnActiveSuspensionModel, MARZBANRAD2004StochasticOptimalPreviewControl, HashikuraEtAl2020OnImplementationOfH2PreviewOutputFeedbackLaw} and discrete-time \cite{Hazell2010FrameworkForDiscreteTimeH2PreviewControl, Ting2021WindTunnelStudy, Zattoni2026H2OptimalDecouplingWithPreview, Tomizuka1975OptimalDiscreteFinitePreviewProblems, Tomizuka1976OptimalLinearPreviewControl, ROH1999StochasticOptimalPreviewControl, Louam1988OptimalControl, Yim2011DesignOfPreviewController, TomizukaRosenthal1979OnTheOptiamlDigitalStateVectorFeedbackController, TomizukaFung1980DesignOfDigitalFeedforwardPreviewControllers}. A common approach in discrete-time is to construct an augmented system with a chain of delays to store the 
disturbance preview information.  This technique simplifies the preview control design as the standard LQR/LQG/$H_2$ solutions can be applied to this augmented model \cite{SentouhEtAl2011TheH2OptimalPreviewController, Hazell2010FrameworkForDiscreteTimeH2PreviewControl, Ting2021WindTunnelStudy, Kojima1999LQPreviewSynthesis, LINDQUIST1968OnOptimalStochasticControlWithSmoothedInformation, Tomizuka1975OptimalDiscreteFinitePreviewProblems, Tomizuka1976OptimalLinearPreviewControl, ROH1999StochasticOptimalPreviewControl, Louam1988OptimalControl, Peng1991optimal, Yim2011DesignOfPreviewController, YOSHIMURA1993AnActiveSuspensionModel, MARZBANRAD2004StochasticOptimalPreviewControl, TomizukaRosenthal1979OnTheOptiamlDigitalStateVectorFeedbackController, TomizukaFung1980DesignOfDigitalFeedforwardPreviewControllers}.  The work in \cite{Hac1992OptimalLinearPreviewControl} is an exception as it use calculus of variations to directly derive necessary conditions for optimality.

LQR/LQG/$H_2$ with preview is a reasonable formulation in applications where sensors provide preview measurements of stochastic disturbances.  For example, wind turbines must maximize power capture and reduce structural loads in the presence of wind speed fluctuations. LIDARs mounted on the turbine nacelle can measure the incoming wind field thus improving power capture and load reduction \cite{ozdemir13,schlipf13,scholbrock16}.  In this setting, the wind speed fluctuations are stochastic but can be measured with preview. As a second example, active vehicle suspensions can be used to improve ride and handling qualities. Forward-looking radars measure the upcoming road profile.  These measurements can be used for the active suspension control.  Again, this setting can be modeled with a stochastic disturbance, i.e., the road profile, with preview measurements \cite{Tomizuka1976OptimalLinearPreviewControl,Hac1992OptimalLinearPreviewControl,ROH1999StochasticOptimalPreviewControl,Louam1988OptimalControl,YOSHIMURA1993AnActiveSuspensionModel,MARZBANRAD2004StochasticOptimalPreviewControl}.

Our paper makes the following contributions to this literature. First, we solve the stochastic LQR problems with finite disturbance preview on both finite and infinite time horizons (Sections\ref{sec:FHresult}
and \ref{sec:IHresult}). In principle, these solutions follow from existing stochastic LQR results by using the state augmentation procedure.  However, we provide an independent proof for the finite-horizon and infinite-horizon problems based on the principle of optimality.
Moreover, we provide a novel proof for the principle of optimality in this setting
(Section~\ref{sec:principleofoptim}). Our proof only relies on the assumption of nested information and may be of independent interest. Finally, we compare the performance of the finite disturbance preview controller against the optimal noncausal controller (Theorem 11.2.1 of
\cite{hassibi99}). The optimal noncausal controller has full knowledge of the future disturbance. We show that the finite preview controller converges, in a certain sense, to the optimal noncausal controller as the preview horizon tends to infinity  (Section~\ref{sec:NCresult}). Finally, we illustrate the results with a simple example (Section~\ref{sec:example}).

Two of the most related works are \cite{Hac1992OptimalLinearPreviewControl} and  \cite{ROH1999StochasticOptimalPreviewControl}. Calculus of variations is used in \cite{Hac1992OptimalLinearPreviewControl} to derive a deterministic preview control solution. The approach starts by deriving the optimal noncausal controller with full knowledge of future disturbance, similar to that given in Theorem 11.2.1 of \cite{hassibi99}. This optimal noncausal controller is then converted to a finite-preview controller by taking the expectation for unavailable future (post-preview) information which is zero.  In \cite{ROH1999StochasticOptimalPreviewControl}, an LQG problem is solved with a cost function that includes cross terms between the state and input. They address the finite preview problem using an augmented state approach. The results in
both \cite{Hac1992OptimalLinearPreviewControl} and  \cite{ROH1999StochasticOptimalPreviewControl}
are similar to those contained in our paper.  The key difference lies in the approach used to solve the problem. Specifically, our proof directly uses a version of the principle of optimality with nested information.  This eliminates the need for state augmentation and hence our optimal controller is directly expressed with dynamics of the same order of the plant (not including augmented states).  Moreover, our finite horizon derivation can directly incorporate non-zero initial conditions.

\section{Notation}
\label{sec:Notation}

Let $\R^n$ and $\R^{n\times m}$ denote the sets of real $n\times 1$ vectors and $n\times m$ matrices, respectively.  The superscript $\top$ denote  the matrix transpose.  Moreover, if $M\in \R^{n\times n}$ then $M\succ 0$ denotes that $M$ is a symmetric positive definite matrix.  Similarly, we use $\succeq$, $\prec$, and $\preceq$ for positive semidefinite, negative definite, and negative semidefinite matrices, respectively.
A matrix $A\in \R^{n\times n}$ is said to be Schur if the spectral radius is $<1$.  We also introduce random variables in the problem formulation and use $\E$ to denote the expectation. Finally, we denote the Kronecker delta $\delta_{ij}$ where $\delta_{ij}=1$ if $i=j$ and $\delta_{ij}=0$ otherwise.

\section{Problem Formulation}
\label{sec:problemform}

We formulate the stochastic Linear Quadratic Regulator (LQR) problem with preview information of the
disturbance.  The problem is stated on both finite and infinite horizons. 

Consider the  discrete-time, linear time-varying (LTV) plant $P$ with the following state-space representation:
\begin{align}
\label{eq:LTVPlant}
x_{t+1} = A_t \, x_t + B_{u,t}\, u_t+B_{w,t}\, w_t,
\end{align}
where $x_t \in \R^{n_x}$, $u_t \in \R^{n_u}$, and $w_t \in \R^{n_w}$ are the state, input, and disturbance at time $t$, respectively.  The state matrices have compatible dimensions, e.g. $A_t\in \R^{n_x\times n_x}$, and are defined on a finite horizon $t=0, 1,\ldots, T-1$ with $T<\infty$. We assume the disturbance is the independent and identically distributed (IID) noise: $\E[w_i]=0$ and $\E[w_iw_j^{\top}]=\delta_{ij} \, I$ for $i,j\in \{0,1,\ldots,T-1\}$.  We allow for the initial condition $x_0$ to be non-zero.


The goal is to design a controller to reject the effect of the disturbance.  We assume the controller has measurements of the state and $p$ steps of disturbance preview where $p\ge 0$. Specifically, the controller has access to the following information at time $t$:
\begin{align}
\label{eq:FHinfo}
\begin{split}    
    i_t & \coloneq  \{x_0,\dots,x_t,w_0, \dots,w_{t+p}\} \,\, \mbox{ if } t\le T-p-1 \\
    i_t & \coloneq  \{x_0,\dots,x_t,w_0, \dots,w_{T-1}\} \,\, \mbox{ otherwise.}
\end{split}
\end{align}
The available information depends on the amount of preview $p\ge 0$.  Also note that the information $i_t$ contains some redundant information.  However, it is important that this information pattern is nested, i.e. $i_0 \subseteq i_1 \subseteq \dots \subseteq i_{T-1}$. This allows the controller, in general, to access past information as time goes on. This nesting structure is also used later in our formal proofs.

We allow for controllers that can take random actions based on the available information.  Thus the controller policy at time $t$ is specified by a probability density function over possible actions given the available information:
\begin{align}
\label{eq:Kt}
  K_t(u_t,i_t) \coloneq  \Prob(u_t \, | \, i_t)
\end{align}
This general formulation includes deterministic policies as a special case, which corresponds
to functions of the form $u_t = k_t(i_t)$.
We'll denote the sequence of control policies by $K\coloneq \{K_0,\ldots,K_{T-1}\}$.
The performance is evaluated with a linear quadratic cost. In particular, the cost for a sequence of policies $K$ evaluated on a specific disturbance sequence $w$ and initial condition $x_0$ is defined as follows: 
\begin{align}
\label{eq:JT}
J_T(K,w,x_0)\coloneq x_T^\top Q_T x_T + \sum_{t=0}^{T-1} x_t^\top Q_t x_t + u_t^\top R_tu_t ,
\end{align}
where $Q_t \succeq 0$ for $t=0,\ldots, T$ and $R_t  \succ 0$ 
for $t=0,\ldots, T-1$. These matrices define the state and control costs.  

We now state the finite-horizon (FH), stochastic LQR
problem with preview information.
\begin{problem}[FH Stochastic LQR With Preview]
\label{prob:FHLQR}
Consider the LTV plant \eqref{eq:LTVPlant} defined on $t=0,\ldots,T-1$ and cost function in \eqref{eq:JT} with $Q_t \succeq 0$ for
$t=0,\ldots, T$ and
$R_t  \succ 0$ 
for $t=0,\ldots, T-1$.  Find a sequence of policies $K\coloneq \{K_0,\ldots,K_{T-1}\}$ 
to solve:
\begin{align}
     J_{T,p}^*(i_0)\coloneq \min_{K} \E\left[ J_T(K,w,x_0) \, | \, i_0 \right].
\end{align}
The policy $K_t$ at time $t$ uses information $i_t$ in \eqref{eq:FHinfo} and includes $p\ge 0$ steps of disturbance preview.
\end{problem}

We will also consider a similar problem on an infinite horizon.  In this case, we will assume the plant $P$
is linear, time-invariant (LTI):
\begin{align}
\label{eq:LTIPlant}
x_{t+1} = A \, x_t + B_{u}\, u_t+B_{w}\, w_t,
\end{align}
where $(A,B)$ are a stabilizable pair and $A$ is nonsingular\footnote{The solution to the infinite horizon problem (Section~\ref{sec:IHresult}) uses a related, discrete-time algebraic Riccati equation (DARE). There are technical issues in solving this DARE when $A$ is singular.  Our infinite horizon results can be extended to the case where $A$ is singular using generalized solutions for the DARE. See Section 21.3 and Remark 21.2 in \cite{Zhou1996Robust} for details.}. We will also assume a sequence of (possibly random) control policies
$K\coloneq \{K_0,K_1,\ldots\}$ with $p$ steps of disturbance preview:
\begin{align}
\label{eq:IHinfo}
    i_t & \coloneq  \{x_0,\dots,x_t,w_0, \dots,w_{t+p}\}. 
\end{align}
The cost function for a sequence of policies $K$ evaluated on a specific disturbance $w$ and initial condition $x_0$
is defined as the average per-step cost over the infinite horizon:
\begin{align}
\label{eq:Jinf}
J_\infty(K,w,x_0)\coloneq  \lim_{T\to \infty} \frac{1}{T} 
    \sum_{t=0}^{T-1} x_t^\top Q x_t + u_t^\top R u_t .
\end{align}
Here we assume $Q\succeq 0$, $R\succ 0$, and $(A,Q)$ is detectable.  The detectability assumption ensures that
any unstable modes in the plant will appear in the cost function.  We now state the infinite-horizon (IH), stochastic LQR problem with preview information. We restrict to policies that are stabilizing, i.e. the state remains uniformly bounded in both mean and second-moment.

\begin{problem}[IH Stochastic LQR With Preview]
\label{prob:IHLQR}
Consider the LTI plant \eqref{eq:LTIPlant} with $A$ nonsingular, $(A,B)$ stabilizable and cost function in \eqref{eq:Jinf} with $Q\succeq 0$, $R \succ 0$ and $(A,Q)$ detectable. Find a sequence of policies $K\coloneq \{K_0,K_1\dots\}$ 
to stabilize the plant and solve:
\begin{align}
    J_{\infty,p}^*(i_0)\coloneq \min_{K \mbox{stabilizing}} \E\left[ J_\infty(K,w,x_0) \, | \, i_0 \right].
\end{align} 
The policy $K_t$ at time $t$ uses information $i_t$ in \eqref{eq:IHinfo} and includes $p\ge 0$ steps of disturbance preview.
\end{problem}

The IH cost could depend on the initial information $i_0$ for some policies, e.g. a policy could lead to unbounded trajectories and infinite cost for some initial conditions $x_0$ but not others.  However, we will show in Section~\ref{sec:IHresult} that the optimal cost does not depend on the initial conditions, i.e. $J_{\infty,p}^*(i_0)=J_{\infty,p}^*$.

\section{Principle of Optimality}
\label{sec:principleofoptim}
Our solutions for the FH and IH problems with preview will rely on the principle of optimality.  This is a well known principle, e.g. see Section 2.2. of \cite{anderson2007optimal}.  Here we will state a specific version that can be applied to the problems with disturbance preview.  We state the principle of optimality for a more general, finite-horizon cost function:
\begin{align}
\label{eq:Jgeneral}
J(K,i_0)\coloneq \E\left[ g_T(x_T) + \sum_{t=0}^{T-1} g_t(x_t,u_t) \, \middle\vert \, i_0 \right],
\end{align}
where the per-step cost functions $g_0,\ldots,g_T$ are given.  In this section we drop the subscripts in the cost function $J$ to simplify the notation. The FH cost in the previous section is a special case where the $g_0,\ldots,g_T$ are quadratic functions. Moreover, we will consider a more general information structure for $i_t$. Specifically, we only assume that $i_t$ is nested, i.e. $i_t\subseteq i_{t+1}$. The information structures in the previous section have this nested structure. The goal is to solve $J^*(i_0) = \min_K J(K,i_0)$. 

We define the value function or optimal cost-to-go at time $t$ as follows:
\begin{align}
\label{eq:ValueFunction}
V_t(i_t)\coloneq \min_{K_t,\ldots,K_{T-1}}
\E\left[ g_T(x_T) + \sum_{j=t}^{T-1} g_j(x_j,u_j) \, \middle\vert \, i_t \right].
\end{align}
The value function at time $t=0$ corresponds to the optimal cost: $V_0(i_0)=J^*(i_0)$. The principle of optimality states that we can compute the value functions recursively, optimizing over one action at a time. 

\begin{theorem}[Principle of Optimality]
\label{thm:PrincipleOfOpt}
Define the value function in
\eqref{eq:ValueFunction} and assume the
information is nested:
$i_0 \subseteq i_1 \subseteq \dots \subseteq i_{T-1}$. Then the value function satisfies the following backwards recursion starting from $t=T$:
\begin{align}
V_T(i_T) &=  \E[g_T(\boldsymbol{x}_T)\, | \, i_T], \\
\label{eq:ValueIteration}
V_t(i_t)  &=  \min_{u} \E\left[g_t(x_t,u_t)+V_{t+1}(i_{t+1}) \, \middle| \, i_t, u_t=u \right] \\
\nonumber
&\quad \mbox{ for } t=0,\dots,T-1.
\end{align}
Moreover, the optimal cost $J^*(i_0)$ is achieved by a deterministic policy defined by selecting $u_t=k_t(i_t)$ at each $t$ to minimize the value function in \eqref{eq:ValueIteration}.
\end{theorem}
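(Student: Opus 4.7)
The plan is to prove both claims together by backward induction on $t$ from $T$ down to $0$. The base case $t=T$ is immediate from \eqref{eq:ValueFunction}: the set of policies being minimized over is empty, so $V_T(i_T)=\E[g_T(x_T)\mid i_T]$ by definition, and the claim about deterministic policies is vacuous.

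For the inductive step, assume the recursion has been established at $t+1$, so $V_{t+1}(i_{t+1})$ is well defined and is attained by some deterministic tail policy $k_{t+1},\ldots,k_{T-1}$. I would start from the definition
\begin{align*}
V_t(i_t)=\min_{K_t,\ldots,K_{T-1}}\E\!\left[g_t(x_t,u_t)+\sum_{j=t+1}^{T-1}g_j(x_j,u_j)+g_T(x_T)\,\Big|\,i_t\right],
\end{align*}
split the minimization as $\min_{K_t}\min_{K_{t+1},\ldots,K_{T-1}}$, and apply the tower property $\E[\cdot\mid i_t]=\E[\E[\cdot\mid i_{t+1}]\mid i_t]$. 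This is legitimate because the information is nested, $i_t\subseteq i_{t+1}$. The term $g_t(x_t,u_t)$ is $i_{t+1}$-measurable and does not depend on $K_{t+1},\ldots,K_{T-1}$, so it passes outside the inner conditional expectation and the inner minimization.

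The main obstacle is justifying the interchange
\begin{align*}
\min_{K_{t+1},\ldots,K_{T-1}}\E\!\left[\sum_{j=t+1}^{T-1}g_j(x_j,u_j)+g_T(x_T)\,\Big|\,i_{t+1}\right]=V_{t+1}(i_{t+1}),
\end{align*}
which must hold for almost every realized $i_{t+1}$. The direction ``$\le$'' is clear because the optimal tail policies provided by the inductive hypothesis are feasible. The direction ``$\ge$'' uses that any admissible tail $K_{t+1},\ldots,K_{T-1}$ is in particular admissible in the definition of $V_{t+1}(i_{t+1})$, so its conditional expectation given $i_{t+1}$ is no less than $V_{t+1}(i_{t+1})$; applying the outer $\E[\cdot\mid i_t]$ preserves the inequality. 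Crucially, because each policy $K_j$ is a function of $i_j$ and the information is nested, the class of admissible tail policies does not depend on the particular realization of $i_t$, which is what permits the pointwise comparison inside the outer expectation.

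After the interchange, the recursion reduces to $V_t(i_t)=\min_{K_t}\E[g_t(x_t,u_t)+V_{t+1}(i_{t+1})\mid i_t]$. Writing the outer expectation against the conditional density $K_t(u,i_t)=\Prob(u_t=u\mid i_t)$ gives
\begin{align*}
\int K_t(u,i_t)\,\E\!\left[g_t(x_t,u)+V_{t+1}(i_{t+1})\,\big|\,i_t,u_t=u\right]du,
\end{align*}
which is minimized by placing all probability mass on any minimizer of the inner conditional expectation with respect to $u$. This simultaneously proves \eqref{eq:ValueIteration} and exhibits a deterministic optimal selection $u_t=k_t(i_t)$, closing the induction. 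A standard measurable-selection argument ensures that $k_t$ can be chosen as a measurable function of $i_t$; this is automatic in the quadratic LQR setting to which the theorem will be applied, where the pointwise minimizer turns out to be an explicit affine function of $i_t$.
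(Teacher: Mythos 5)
Your proof is correct and follows essentially the same route as the paper's: backward induction, the tower rule justified by nested information, a two-sided inequality to identify the inner minimization over tail policies with $V_{t+1}(i_{t+1})$ (the paper packages this as a lower bound $V_t^{K_{t:T-1}}\ge W_t$ for every policy plus achievability), and the integral-against-the-policy-density argument to reduce randomized $K_t$ to a deterministic minimizer. The only quibble is your claim that $g_t(x_t,u_t)$ is $i_{t+1}$-measurable, which is not guaranteed in the general setting (the information sets need not determine $u_t$) but is also unnecessary: splitting $\E[g_t+\text{tail}\mid i_t]$ by linearity and applying the tower rule only to the tail sum, as the paper does, yields the same decomposition.
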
 
\begin{proof}
Recall the value function defined in 
\eqref{eq:ValueFunction}. Now define a recursive version as
\begin{align*}
W_T(i_T) &\coloneq   \E[g_T(\boldsymbol{x}_T)\, | \, i_T], \\
W_t(i_t)  &\coloneq   \min_u \E\left[g_t(x_t,u_t)+W_{t+1}(i_{t+1}) \, \middle| \, i_t, u_t=u \right] \\
&\hphantom{:}\quad \mbox{ for } t=0,\dots,T-1.
\end{align*}
We will show, by induction, that $W_t=V_t$ for all $t$. The base case $W_T=V_T$ holds by definition. Next, make the induction assumption that $W_{t+1}=V_{t+1}$. We will prove that $W_t=V_t$ holds as well.  Consider the cost-to-go defined for any fixed set of policies $\{K_t,\dots,K_{T-1}\}$:
\begin{align}
V_t^{K_{t: T-1}}(i_t)  \coloneq 
\E\left[g_T(x_T)+\sum_{j=t}^{T-1} g_j(x_j,u_j)
\, \middle| \, i_t \right].
\end{align}
We can re-write this using the tower rule\footnote{We have $\E\bigl[\E[x|y]\bigr]=\E[x]$. More generally, whenever $y\subseteq z$, we have $\E\left[\, \E[x|z]\, \middle| \, y\, \right]=\E[x|y]$. See Theorem 9.1.5 in \cite{chung2001}. This is also known as the law of total expectation.}
\begin{align}
\label{eq:VkTower}
V_t^{K_{t: T-1}}(i_t)  
=\E\left[g_t(x_t,u_t) \, \middle| \, i_t \right]
+ \E\left[
\E\left[g_T(x_T)+\sum_{j=t+1}^{T-1} g_j(x_j,u_j)
\, \middle| \, i_{t+1} \right]
\, \middle| \, i_t \right].
\end{align}
The tower rule applies due to the assumption of nested information,  $i_t \subseteq i_{t+1}$. 
The second term depends on the policies
$\{K_{t+1},\dots,K_{T-1}\}$.  We can lower bound this term by the value function at time $t+1$. This gives:
\begin{align}
\label{eq:VkLowerBound1}
V_t^{K_{t: T-1}}(i_t)  
\ge \E\left[g_t(x_t,u_t)+
V_{t+1}(i_{t+1})  
\, \middle| \, i_t \right].
\end{align}
We have $W_{t+1}=V_{t+1}$ by the induction assumption. Next note that that the information $i_t$ is trivially a subset of the information
$\{i_t,u_t\}$. Hence another application of the tower rule gives:
\begin{align}
\label{eq:VkTower2}
V_t^{K_{t: T-1}}(i_t)  
\ge \E\left[ \vphantom{\Big|}
\E\left[ g_t(x_t,u_t)+
W_{t+1}(i_{t+1}) 
\, \middle| \, i_t, u_t=u \right]
\, \middle| \, i_t \right].
\end{align}
The right side depends on the policy $K_t$.  As stated previously, this policy is specified by a probability density function $K_t(u_t,i_t) \coloneq  \Prob(u_t \, | \, i_t)$. We can replace the outer expectation in \eqref{eq:VkTower2} by an integral over all control actions:
\begin{align}
V_t^{K_{t: T-1}}(i_t)  
\ge 
\int
\E\left[ g_t(x_t,u_t)+
W_{t+1}(i_{t+1}) 
\, \middle| \, i_t, u_t=u \right]
K_t(u,i_t) \, du.
\end{align}
Finally, the right side is lower bounded by  minimizing over the control input:
\begin{align}
\label{eq:VkLowerBound2}
V_t^{K_{t: T-1}}(i_t) 
\ge \min_u \E\left[g_t(x_t,u_t)+W_{t+1}(i_{t+1}) \, \middle| \, i_t, u_t=u \right].
\end{align}
The right side is, by definition, $W_t(i_t)$. Thus we have shown that $V_t^{K_{t: T-1}}(i_t)  \ge W_t(i_t)$
for any set of policies $\{K_t,\dots,K_{T-1}\}$.
This implies that the value function at time $t$ satisfies $V_t(i_t) \ge W_t(i_t)$.

In fact, we can find a set of policies 
$\{K_{t},\dots,K_{T-1}\}$ that
make $V_t(i_t) = W_t(i_t)$. For Equation~\ref{eq:VkLowerBound1}, we have equality if we pick $\{K_{t+1}, \ldots, K_{T-1} \}$ to be the optimal policies for the value function at $t+1$. For Equation~\ref{eq:VkLowerBound2}, we achieve equality by selecting a deterministic policy $u_t=k_t(i_t)$ with
\begin{align*}
  u_t & =\arg \min_u \E\left[g_t(x_t,u_t)+W_{t+1}(i_{t+1}) \, \middle| \, i_t, u_t=u \right]. 
\end{align*}
The policy that achieves $J^*(i_0)$ from will be deterministic
if we use this choice for $t=0$ to $T-1$.
\end{proof}

Note that the proof only relies on the assumption of nested information. It does not use any specific assumptions regarding the model / dynamics relating $x_{t+1}$ and $(x_t,u_t)$ (e.g., Markov assumption). It also does not rely on specific structure for the per-step cost functions $g_0,\ldots,g_T$. That said, we will apply this version of the principle of optimality in the next section for the specific case with linear dynamics, quadratic cost functions, and nested information with disturbance preview.

\section{Main Results}

\subsection{FH Stochastic LQR with Preview}
\label{sec:FHresult}

This section presents the solution to the FH stochastic
LQR problem with disturbance preview (Problem~\ref{prob:FHLQR}). We start with a standard technical lemma regarding a backwards Riccati iteration.

\begin{lemma}
\label{lem:RiccatiIter}
Let $\{A_t\}_{t=0}^{T-1} \subset \R^{n_x\times n_x}$,
$\{B_{u,t}\}_{t=0}^{T-1} \subset \R^{n_x\times n_u}$,
$\{Q_t\}_{t=0}^{T} \subset \R^{n_x\times n_x}$,
and
$\{R_t\}_{t=0}^{T-1} \subset \R^{n_u\times n_u}$
be given. Assume $Q_t \succeq 0$ and $R_t \succ 0$ for each $t$.
Define the following backwards Riccati iteration
for $t=0,\ldots,T$:
\begin{align}
\label{eq:RDE}
  &P_T\coloneq Q_T, \\
  \nonumber
  &P_t\coloneq Q_t+A_t^{\top}P_{t+1}A_t-A_t^{\top}P_{t+1}B_{u,t}H_t^{-1}B_{u,t}^{\top}P_{t+1}A_t,
\end{align}
where 
\begin{align}
\label{eq:Ht}
     H_t \coloneq R_t+B_{u,t}^{\top}P_{t+1}B_{u,t}
     \mbox{ for } t=0,\ldots, T-1
\end{align}
Then $P_t\succeq 0$ for $t=0,\ldots, T$.
and $H_t \succ 0$, for $t=0,\ldots,T-1$.
Hence $H_t$ is nonsingular for each $t$ and the Riccati iteration is well defined.
\end{lemma}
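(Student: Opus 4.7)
The plan is to proceed by backwards induction on $t$, starting from $t=T$ and working down to $t=0$. The base case is immediate: $P_T = Q_T$, which is symmetric and satisfies $P_T \succeq 0$ by hypothesis. For the inductive step, I would assume $P_{t+1}$ is symmetric and satisfies $P_{t+1} \succeq 0$, then deduce the same two properties for $P_t$, while along the way verifying that $H_t \succ 0$ so that the recursion is well defined.

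The easy half of the inductive step is the claim that $H_t \succ 0$. Since $P_{t+1} \succeq 0$, we have $B_{u,t}^\top P_{t+1} B_{u,t} \succeq 0$, and adding $R_t \succ 0$ gives $H_t \succ 0$. In particular $H_t$ is invertible and symmetric, so $H_t^{-1}$ is symmetric, and hence the right-hand side of the recursion \eqref{eq:RDE} is well defined and symmetric. This disposes of invertibility and symmetry of $P_t$.

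The core step is showing $P_t \succeq 0$. My plan is to use a completion-of-squares identity. Define the auxiliary gain
\begin{align*}
K_t \coloneq -H_t^{-1} B_{u,t}^\top P_{t+1} A_t.
\end{align*}
A direct (but routine) expansion, using $H_t = R_t + B_{u,t}^\top P_{t+1} B_{u,t}$, shows that the recursion \eqref{eq:RDE} can be rewritten as
\begin{align*}
P_t = Q_t + (A_t + B_{u,t} K_t)^\top P_{t+1} (A_t + B_{u,t} K_t) + K_t^\top R_t K_t.
\end{align*}
Once this identity is in hand, the conclusion is immediate: each of the three summands is symmetric positive semidefinite ($Q_t \succeq 0$ by assumption, the middle term because $P_{t+1} \succeq 0$ by induction, and the last term because $R_t \succ 0$), so $P_t \succeq 0$. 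This completes the induction.

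The only thing to be careful about is the completion-of-squares algebra itself, since it is the one place where symmetry of $P_{t+1}$ and of $H_t$ are used; everything else is a one-line inequality. I would deliberately avoid a proof via the matrix inversion lemma (which would write the bracketed term as $(P_{t+1}^{-1} + B_{u,t} R_t^{-1} B_{u,t}^\top)^{-1}$), because that approach requires $P_{t+1}$ to be invertible, which is stronger than the hypothesis $Q_t \succeq 0$ ultimately gives. The completion-of-squares route needs only the positive semidefiniteness that is available.
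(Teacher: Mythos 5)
Your proof is correct, and it takes a genuinely different route from the paper's. The paper also argues by backwards induction, but for the key step $P_t \succeq 0$ it invokes the \emph{generalized} matrix inversion lemma (Fact 8.4.13 of Bernstein) to rewrite the recursion as
\begin{align*}
P_t = Q_t + A_t^\top P_{t+1} \left( P_{t+1} + P_{t+1} B_{u,t} R_t^{-1} B_{u,t}^\top P_{t+1} \right)^{\dagger} P_{t+1} A_t,
\end{align*}
where $\dagger$ is the Moore--Penrose pseudoinverse, and then reads off positive semidefiniteness from the fact that the pseudoinverse of a PSD matrix is PSD. Note that this generalized form does \emph{not} require $P_{t+1}$ to be invertible --- your stated reason for avoiding the matrix inversion lemma applies only to its naive version with $(P_{t+1}^{-1} + B_{u,t} R_t^{-1} B_{u,t}^\top)^{-1}$, not to the pseudoinverse formulation the paper actually uses. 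That said, your completion-of-squares identity
\begin{align*}
P_t = Q_t + (A_t + B_{u,t} K_t)^\top P_{t+1} (A_t + B_{u,t} K_t) + K_t^\top R_t K_t
\end{align*}
is verified by a short direct expansion (the cross terms and the $K_t^\top H_t K_t$ term combine to give exactly $-A_t^\top P_{t+1} B_{u,t} H_t^{-1} B_{u,t}^\top P_{t+1} A_t$), and it has two advantages: it is entirely self-contained, requiring no external fact about pseudoinverses, and it is the same identity one needs anyway in the dynamic-programming argument of Theorem~\ref{thm:FHLQR}, where $K_t$ is (up to sign) the optimal feedback gain $K_{x,t}$. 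Your handling of $H_t \succ 0$ and of symmetry matches the paper's. Both proofs are valid; yours is arguably the more elementary and more informative of the two.
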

\begin{proof}
The proof is by induction. The base case is $P_T\succeq 0$ and $H_{T-1} \succ 0$.
This follows from the assumptions  $Q_T\succeq 0$ and $R_{T-1} \succ 0$.
Next, make the induction assumption that
$P_{t+1}\succeq 0$ and $H_t \succ 0$.
We will show that $P_t\succeq 0$ and $H_{t-1}\succ 0$. 
Use the generalized matrix inversion lemma (Fact 8.4.13 of \cite{bernstein2018scalar})
to rewrite the Riccati iteration: 

{\small
\begin{align*}
    P_t = Q_t 
+ A_t^\top P_{t+1} \left( P_{t+1} + P_{t+1} B_{u,t} R_t^{-1} B_{u,t}^\top P_{t+1}
\right)^{\dagger} P_{t+1} A_t,
\end{align*}
}

\noindent
where $^{\dagger}$ is the Moore-Penrose pseudo inverse. Thus $P_t\succeq 0$ follows from
$P_{t+1}\succeq 0$, $Q_t \succeq 0$ and $R_t \succ 0$. Moreover, $P_t \succeq 0$ combined with $R_{t-1} \succ 0$ imply $H_{t-1}\succ 0$.
\end{proof}

The next theorem provides the
solution to Problem~\ref{prob:FHLQR}. 
The proof is based on
the principle of optimality (Theorem~\ref{thm:PrincipleOfOpt}) and dynamic programming. 

\begin{theorem}
\label{thm:FHLQR}
Consider the FH stochastic LQR with preview including the assumptions stated in Problem~\ref{prob:FHLQR}. Define the
following feedback gains using the solution of backwards Riccati iteration \eqref{eq:RDE}:
\begin{align}
\label{eq:KxFH}
   K_{x,t} &\coloneq H_t^{-1}B_{u,t}^{\top}P_{t+1}A_t, \\
\label{eq:KwFH}
   K_{w,t} &\coloneq H_t^{-1}B_{u,t}^{\top}P_{t+1}B_{w,t},\\
\label{eq:KvFH}
   K_{v,t} &\coloneq H_t^{-1}B_{u,t}^{\top}. 
\end{align}
The sequence of policies that achieve $J_{T,p}^*(i_0)$ are deterministic, and have the form: 
\begin{align}
\label{eq:FHustar}
  u_t^*=-K_{x,t}x_t-K_{w,t}w_t-K_{v,t}v_{t+1},  
\end{align}
where $v_{t+1}$ depends on $\{w_{t+1},\ldots,w_{t+p} \}$ and is given by:
\begin{align}
\label{eq:vFH}
\begin{split}
& v_{t+1} = \sum_{j=t+1}^{t+p}
\left[ \hat{A}_{t+1}^\top
\cdots
\hat{A}_j^\top
\right]
P_{j+1} B_{w,j} w_j \\
& \mbox{ with }
\hat{A}_j\coloneq A_j-B_{u,j}K_{x,j}.
\end{split}
\end{align}
Equation~\ref{eq:vFH} uses the convention that
$w_j=0$ when $j\geq T$.
\end{theorem}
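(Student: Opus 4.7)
The plan is to combine the principle of optimality (Theorem~\ref{thm:PrincipleOfOpt}) with a backward induction on the form of the value function. I ansatz that, for each $t = 0, 1, \dots, T$,
\begin{align*}
V_t(i_t) = x_t^\top P_t x_t + 2 x_t^\top \alpha_t + \beta_t,
\end{align*}
where $P_t$ is the solution of the Riccati iteration \eqref{eq:RDE}, $\alpha_t$ is a linear combination of the previewed disturbances $\{w_t, \dots, w_{t+p}\}$ that are available at time $t$ (using the convention $w_j = 0$ for $j \geq T$), and $\beta_t$ is a random scalar that depends only on disturbances and is functionally independent of the choice of policy. The base case $t = T$ is immediate from the terminal cost: $P_T = Q_T$, $\alpha_T = 0$, $\beta_T = 0$.

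For the induction step, I would substitute the ansatz for $V_{t+1}$ into the Bellman recursion \eqref{eq:ValueIteration}. Because $w_t \in i_t$, the next state $x_{t+1} = A_t x_t + B_{u,t} u + B_{w,t} w_t$ is fully determined by $i_t$ and $u$. The only randomness in $\alpha_{t+1}$ that is not contained in $i_t$ is the single disturbance $w_{t+1+p}$, and by zero-mean IID noise its contribution drops out, giving $\bar{\alpha}_{t+1} \coloneq \E[\alpha_{t+1} \mid i_t]$ equal to $\alpha_{t+1}$ with its $w_{t+1+p}$ term deleted. The resulting expression is quadratic in $u$ with Hessian $H_t \succ 0$, so the unique minimizer is
\begin{align*}
u_t^* = -H_t^{-1} B_{u,t}^\top \bigl(P_{t+1}(A_t x_t + B_{w,t} w_t) + \bar{\alpha}_{t+1}\bigr) = -K_{x,t} x_t - K_{w,t} w_t - K_{v,t}\, \bar{\alpha}_{t+1}.
\end{align*}
This matches \eqref{eq:FHustar} once I identify $v_{t+1}$ with $\bar{\alpha}_{t+1}$, so the remaining task is to verify that $\bar{\alpha}_{t+1}$ equals the closed form in \eqref{eq:vFH}.

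Substituting $u_t^*$ back and grouping by powers of $x_t$ closes the induction. The quadratic-in-$x_t$ coefficient reproduces \eqref{eq:RDE}. Using the identity $A_t^\top (I - P_{t+1} M_t) = \hat{A}_t^\top$ with $M_t \coloneq B_{u,t} H_t^{-1} B_{u,t}^\top$, the linear-in-$x_t$ coefficient satisfies the one-step recursion
\begin{align*}
\alpha_t = \hat{A}_t^\top \bigl(P_{t+1} B_{w,t} w_t + \bar{\alpha}_{t+1}\bigr).
\end{align*}
Unrolling this backward, each step prepends one factor of $\hat{A}_t^\top$ to the coefficient of every previewed $w_j$ that has already appeared, yielding $\alpha_t = \sum_{j=t}^{t+p} [\hat{A}_t^\top \cdots \hat{A}_j^\top] P_{j+1} B_{w,j} w_j$. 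Deleting the $w_{t+1+p}$ term then gives exactly \eqref{eq:vFH} for $v_{t+1} = \bar{\alpha}_{t+1}$. All remaining scalar terms collect into $\beta_t$, which does not affect $u_t^*$; that the minimizer is deterministic is inherited directly from Theorem~\ref{thm:PrincipleOfOpt}.

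The main obstacle I anticipate is the algebraic bookkeeping when unrolling the $\alpha_t$ recursion: verifying that the telescoping products $\hat{A}_t^\top \cdots \hat{A}_j^\top$ arise with the correct index limits, and that the truncation convention $w_j = 0$ for $j \geq T$ correctly handles the shrinking effective preview window near the horizon so that the formula \eqref{eq:vFH} holds uniformly in $t$.
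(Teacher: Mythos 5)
Your proposal is correct and follows essentially the same route as the paper: backward induction via the principle of optimality on the ansatz $V_t(i_t)=x_t^\top P_t x_t + 2x_t^\top \alpha_t + \beta_t$, with the minimizer read off from the strictly convex quadratic in $u$ (Hessian $H_t\succ 0$) and the preview term identified as $\E[\alpha_{t+1}\mid i_t]$, which drops the single unavailable disturbance $w_{t+p+1}$ by zero mean. The only cosmetic difference is that you obtain the closed form \eqref{eq:vFH} by first deriving the one-step recursion $\alpha_t=\hat{A}_t^\top(P_{t+1}B_{w,t}w_t+\bar{\alpha}_{t+1})$ and unrolling it, whereas the paper posits the explicit sum \eqref{eq:vbarFH} directly as part of the induction hypothesis; both are equivalent.
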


\begin{proof}  
The assumptions in 
Problem~\ref{prob:FHLQR} are sufficient for Lemma~\ref{lem:RiccatiIter} to hold. Hence the Riccati iteration and the feedback  gains \eqref{eq:KxFH}-\eqref{eq:KvFH} are well-defined. Moreover $H_t\succ 0$ for each $t$.

Define the value function or optimal cost-to-go at time $t$ for Problem~\ref{prob:FHLQR}  as follows:

{\small
\begin{align}
\label{eq:ValueFunctionLQR}
V_t(i_t)\coloneq \min_{K_t,\ldots,K_{T-1}}
\E\left[ x_T^\top Q_T x_T + \sum_{j=t}^{T-1} x_j^\top Q_j x_j + u_j^\top R_j u_j \, \middle\vert \, i_t \right].
\end{align}
}

\noindent
The value function at time $t=0$ corresponds to the optimal cost: $V_0(i_0)=J_{T,p}^*(i_0)$. 
The value functions can be computed recursively by the principle of optimality (Theorem~\ref{thm:PrincipleOfOpt}):

{\small
\begin{align}
\nonumber
   V_T(i_T)= & \E[x_T^{\top}Q_Tx_T \, | \, i_T] \\
\nonumber
   V_t(i_t) = &  \min_u \E\left[ x_t^{\top}Q_tx_t + u_t^{\top}R_tu_t + V_{t+1}(i_{t+1})
        \, \middle| \, i_t, u_t=u \right] \\
    \label{eq:ValueIterationLQR}
        &   \mbox{ for } t=0,\dots,T-1.
\end{align}
}

\noindent
We will show that the value functions have the form: 
\begin{align}
\label{eq:VFHquadratic}
 V_t(i_t)=x_t^{\top}P_t x_t + 2\bar{v}_t^{\top}x_t+q_t
 \mbox{ for } t=0,\dots,T-1,
\end{align}
where $\{P_t\}_{k=0}^T$ satisfy the backwards Riccati recursion
\eqref{eq:RDE}, $q_t$ depend on $\{w_t,\ldots, w_{t+p}\}$, and $\bar{v}_t$ is given by:
\begin{align}
\label{eq:vbarFH}
\bar{v}_t \coloneq  \sum_{j=t}^{t+p}
\left[ \hat{A}_{t}^\top
\cdots
\hat{A}_j^\top
\right]
P_{j+1} B_{w,j} w_j.
\end{align}
Equation~\ref{eq:vbarFH} again uses the convention that
$w_j=0$ if $j\geq T$.~\footnote{Note that $\bar{v}_t$ depends on $\{w_t,\ldots,w_{t+p}\}$. This means that
$\bar{v}_{t+1}$ depends on $\{w_{t+1},\ldots,w_{t+p+1}\}$. However, $v_{t+1}$ defined in
\eqref{eq:vFH} depends on
$\{w_{t+1},\ldots,w_{t+p}\}$. 
Hence $\bar{v}_{t+1}$ and $v_{t+1}$ differ by one term.} We will show that \eqref{eq:VFHquadratic} holds by induction. The base case at $t=T$ corresponds to 
$P_T=Q_T$, $\bar{v}_T=0$, and $q_T=0$. Next make the induction assumption that the value function at time $t+1$ has the given form:
\begin{align}
\label{eq:ValueLQRtp1}
 V_{t+1}(i_{t+1})=x_{t+1}^{\top}P_{t+1} x_{t+1} + 2\bar{v}_{t+1}^{\top}x_{t+1}+q_{t+1}.
\end{align}
We will prove that the value function at time $t$ has the similar form given in \eqref{eq:VFHquadratic}.
Substitute~\eqref{eq:ValueLQRtp1} into the value function recursion
\eqref{eq:ValueIterationLQR} and use the LTV dynamics \eqref{eq:LTVPlant}
to replace $x_{t+1}$. This yields:

{\small
\begin{align}
\begin{split}
    \label{eq:FHVminu}
V_t(i_t)  = \min_{u} \E[x_t^{\top}Q_tx_t+u_t^{\top}R_tu_t
 +(A_tx_t+B_{u,t}u_t+B_{w,t}w_t)^{\top}P_{k+1}(A_tx_t+B_{u,t}u_t+B_{w,t}w_t)\\
+2\bar{v}_{t+1}^{\top}(A_tx_t+B_{u,t}u_t+B_{w,t}w_t)+q_{t+1} \, | \, i_t, \, u_t=u].
\end{split}
\end{align}
}

\noindent
Note that the quadratic term in this minimization is $u_t^\top H_t u_t$ where $H_t\succ 0$. Thus the  minimization has a unique  minima because the objective is a strictly convex function. Take the gradient respect to $u$ to find the optimal input:
\begin{align}
\label{eq:FHustar2}
    u_t^* = \E \left[  -K_{x,t} x_t - K_{w,t} w_t - K_{v,t} \bar{v}_{t+1}
\, \middle| \, i_t    
    \right].
\end{align}
where the gains are defined in \eqref{eq:KxFH}-\eqref{eq:KvFH}.
Note that $(x_t,w_t)$ are contained in $i_t$ so $E[x_t\, | i_t]=x_t$
and $E[w_t\, | i_t]=w_t$.
Moreover, the induction assumption gives:
\begin{align}
    \bar{v}_{t+1}= \sum_{j=t+1}^{t+p+1}
\left[ \hat{A}_{t+1}^\top
\cdots
\hat{A}_j^\top
\right]
P_{j+1} B_{w,j} w_j.
\end{align}
Thus $E[\bar{v}_{t+1}\, |  i_t] = v_{t+1}$ and the optimal input in \eqref{eq:FHustar2} simplifies to the expression in \eqref{eq:FHustar}.

Next, substitute $u^*_t$ back in to the cost-to-go function~\eqref{eq:FHVminu}.
After some simplification, the value function at time $t$ has the form given in \eqref{eq:VFHquadratic} with
$P_t$ given by the backwards Riccati recursion \eqref{eq:RDE} and $\bar{v}_t$ given by~\eqref{eq:vbarFH}. Moreover,
$q_t$ depends on $\{w_t,\ldots, w_{t+p}\}$ and can be simplified to:
\begin{align*}
q_t = & -(K_{w,t} w_t + K_{v,t} v_{t+1})^\top H_t   
    (K_{w,t} w_t + K_{v,t} v_{t+1}) 
+ w_t^\top B_{w,t}^\top \, (P_{t+1} B_{w,t} w_t + 2v_{t+1})
+ \E[q_{t+1}\, | \, i_t ].
\end{align*}
Hence the proof is complete by induction.
\end{proof}

The optimal cost, based on the proof, is
\begin{align}
J_{T,p}^*(i_0) = x_0^\top P_0 x_0 
+ 2 \bar{v}_0^\top x_0
+ q_0.    
\end{align}
The first term depends only on the state initial condition $x_0$.  The second term depends on both the $x_0$ and the initial disturbance information $\{w_0,...,w_{p}\}$. The third term also depends on the initial disturbance information but also
includes the expected cost of the disturbances $\{w_{p+1},...,w_{T-1}\}$.

\subsection{IH Stochastic LQR with Preview}
\label{sec:IHresult}

This section presents the solution to the IH stochastic LQR problem with disturbance preview (Problem~\ref{prob:IHLQR}). We start with a standard technical lemma regarding a discrete time algebraic Riccati equation (DARE).

\begin{lemma}
\label{lem:DARE}
Let $A \in \R^{n_x\times n_x}$,
$B_{u} \in \R^{n_x\times n_u}$,
$Q \in \R^{n_x\times n_x}$,
and
$R \in \R^{n_u\times n_u}$
be given.  Assume: 
(i) $Q\succeq 0$ and $R \succ 0$, (ii) $\left(A, B_u\right)$ stabilizable, (iii) $A$ is nonsingular, and (iv) $(A, Q)$ has no unobservable modes on the unit circle. Then there is a unique stabilizing solution $P \succeq 0$ such that: 
\begin{enumerate}
\item $P$ satisfies the following DARE:
\begin{equation}
\label{eq:DARE}
0 =P-A^{\top} P A-Q +A^{\top} P B_u H^{-1} B_u^{\top} P A,
\end{equation}
where $H\coloneq R+B_u^{\top} P B_u \succ 0$.

\item The gain $K_x\coloneq H^{-1}B_u^{\top} P A$ is stabilizing, i.e. $A-B_u K_x$ is a Schur matrix.

\end{enumerate}

\end{lemma}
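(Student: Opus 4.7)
The plan is to realise $P$ as the limit of the backwards Riccati iteration of Lemma~\ref{lem:RiccatiIter} specialised to time-invariant data, and then deduce the stabilising property and uniqueness from standard Lyapunov arguments. Set $A_t=A$, $B_{u,t}=B_u$, $Q_t=Q$, $R_t=R$ with terminal cost $Q_T=0$, and let $\bar{P}_k \coloneq P_{T-k}$ denote the iterates reindexed forwards in $k$. Lemma~\ref{lem:RiccatiIter} gives $\bar{P}_k \succeq 0$ throughout. Semidefinite monotonicity of the Riccati map (a standard fact, provable by writing the Riccati operator as a minimisation over feedback gains), together with $\bar{P}_0 = 0 \preceq Q = \bar{P}_1$, yields $\bar{P}_k \preceq \bar{P}_{k+1}$ inductively. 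A uniform upper bound is supplied by stabilisability: for any $K$ with $A - B_u K$ Schur, the discrete Lyapunov equation for the cost of the suboptimal stationary policy $u_t = -K x_t$ has a solution $P^{\mathrm{ub}} \succeq 0$, and optimality of the Riccati iterate forces $\bar{P}_k \preceq P^{\mathrm{ub}}$ for all $k$. Monotone convergence for symmetric matrices then yields $\bar{P}_k \to P \succeq 0$ satisfying the DARE~\eqref{eq:DARE}, and $H = R + B_u^{\top} P B_u \succ 0$ is immediate from $R \succ 0$.

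Next I would show that $K_x = H^{-1} B_u^{\top} P A$ is stabilising. Rearranging~\eqref{eq:DARE} gives the discrete Lyapunov identity
\begin{equation*}
P - \hat{A}^{\top} P \hat{A} = Q + K_x^{\top} R K_x, \qquad \hat{A} \coloneq A - B_u K_x.
\end{equation*}
Suppose $\hat{A} v = \lambda v$ with $v \neq 0$. For $|\lambda| = 1$, pre- and post-multiplying the identity by $v^{*}$ and $v$ gives $v^{*} Q v + v^{*} K_x^{\top} R K_x v = 0$, which forces $Q v = 0$ and $K_x v = 0$; hence $A v = \hat{A} v = \lambda v$, making $v$ an unobservable mode of $(A, Q)$ on the unit circle and contradicting hypothesis (iv). Eigenvalues with $|\lambda| > 1$ are excluded by identifying $P$, the monotone limit of Riccati iterates initialised at zero, as the minimal positive semidefinite solution of the DARE, which under stabilisability and (iv) must be the stabilising solution; hypothesis (iii) enters here to put the associated symplectic pencil in its standard form. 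This last step is the main technical obstacle in the plan — it cannot be read directly from the Lyapunov identity and is precisely where the nonsingularity of $A$ does real work.

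For uniqueness among stabilising positive semidefinite solutions, let $P_1$ and $P_2$ be any two such solutions with closed-loop matrices $\hat{A}_1$ and $\hat{A}_2$. Subtracting the rearranged DAREs for $P_1$ and $P_2$ and completing the square in the gain cross terms is a standard algebraic manipulation yielding $\Delta = \hat{A}_1^{\top} \Delta \hat{A}_2$ for $\Delta \coloneq P_1 - P_2$. Since both $\hat{A}_1$ and $\hat{A}_2$ are Schur, this discrete Sylvester equation has only the trivial solution $\Delta = 0$, so $P_1 = P_2$. The remainder of the argument, beyond the eigenvalue exclusion step above, is a routine chain of monotone-convergence and Lyapunov-equation manipulations.
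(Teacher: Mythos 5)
The paper itself does not prove this lemma from scratch: it simply cites Corollary 21.13 and Theorem 21.7 of Zhou--Doyle--Glover, which establish existence and uniqueness of the stabilizing DARE solution via the symplectic pencil / stable invariant subspace construction. Your proposal instead attempts a self-contained proof by monotone Riccati iteration, which would be a genuinely different (and more elementary) route if it closed -- but it has a real gap at exactly the step you flagged. The limit of the iteration started from $\bar{P}_0=0$ is the \emph{minimal} positive semidefinite solution of the DARE, and your claim that ``under stabilisability and (iv) [this] must be the stabilising solution'' is false. Take $n_x=n_u=1$ with $A=2$, $B_u=1$, $Q=0$, $R=1$: hypotheses (i)--(iv) all hold (the only unobservable mode of $(A,Q)$ is at $2$, not on the unit circle), the DARE reduces to $P^2-3P=0$ with PSD solutions $P=0$ and $P=3$, the iteration from zero stays at $P=0$ forever, and $P=0$ gives $\hat{A}=2$, which is not Schur; the stabilizing solution is $P=3$. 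So the minimal solution and the stabilizing solution genuinely differ under the lemma's hypotheses, and no amount of massaging the Lyapunov identity
\begin{equation*}
P-\hat{A}^{\top}P\hat{A}=Q+K_x^{\top}RK_x
\end{equation*}
for the \emph{minimal} $P$ will exclude eigenvalues of $\hat{A}$ outside the unit circle. The appeal to ``the symplectic pencil in its standard form'' is not a proof; it is precisely the content of the cited Zhou--Doyle--Glover theorems, which construct the stabilizing $P$ from the $n$-dimensional stable eigenspace of the pencil rather than from the iteration.

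Two further remarks. First, had the lemma assumed detectability of $(A,Q)$ (as Problem~\ref{prob:IHLQR} does), your Lyapunov-identity argument \emph{would} close the $|\lambda|>1$ case directly: $\hat{A}v=\lambda v$ with $|\lambda|>1$ forces $v^{*}Pv\le 0$, hence $Pv=0$, $Qv=0$, $K_xv=0$, and $Av=\lambda v$, exhibiting an unstable unobservable mode -- contrary to your remark that this case ``cannot be read directly from the Lyapunov identity.'' It is only under the weaker hypothesis (iv) that the iteration-from-zero strategy breaks down. Second, your unit-circle exclusion and the uniqueness argument via $\Delta=\hat{A}_1^{\top}\Delta\hat{A}_2$ (iterate to get $\Delta=(\hat{A}_1^{\top})^{k}\Delta\hat{A}_2^{k}\to 0$) are both correct; hypothesis (iii) is never actually used in your argument, whereas in the cited construction it is what makes the symplectic \emph{matrix} (as opposed to the pencil) well defined.
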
 

\begin{proof} Statements 1) and 2) follow from Corollary 21.13 and Theorem 21.7 of \cite{Zhou1996Robust} (after aligning the notation). 

\end{proof}

The next theorem constructs the IH stochastic LQR with preview controller using the stabilizing solution of the DARE.

\begin{theorem}
\label{thm:IHLQR}
Consider the IH stochastic LQR with preview including the assumptions stated in Problem~\ref{prob:IHLQR}. Define the
following feedback gains using the solution of DARE \eqref{eq:DARE}:
\begin{align}
\label{eq:KxIH}
   K_x &\coloneq H^{-1}B_u^{\top}P A, \\
\label{eq:KwIH}
   K_w &\coloneq H^{-1}B_u^{\top}PB_w,\\
\label{eq:KvIH}
   K_v &\coloneq H^{-1}B_u^{\top}. 
\end{align}
The sequence of policies that achieve $J_{\infty,p}^*(i_0)$ are deterministic, and have the form: 
\begin{align}
\label{eq:IHustar}
  u_t^*=-K_x x_t-K_w w_t-K_v v_{t+1},  
\end{align}
where $v_{t+1}$ depends on $\{w_{t+1},\ldots,w_{t+p} \}$ and is given by:
\begin{align}
\label{eq:vIH}
\begin{split}
& v_{t+1} = \sum_{j=t+1}^{t+p}
\left[ \hat{A}^\top \right]^{j-t}
P B_w w_j \\
& \mbox{ with }
\hat{A}\coloneq A-B_u K_x.
\end{split}
\end{align}

\end{theorem}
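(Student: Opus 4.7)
The plan is to reduce the infinite-horizon problem to a one-parameter family of finite-horizon problems and pass to the limit $T \to \infty$, using the stabilizing DARE solution $P$ from Lemma~\ref{lem:DARE} as a terminal cost chosen so the backward Riccati iteration stays frozen at $P$. All the ingredients I need---existence of $P \succeq 0$, positive definiteness of $H$, and the Schur property of $\hat{A} = A - B_u K_x$---are supplied by Lemma~\ref{lem:DARE}. Concretely, I would apply Theorem~\ref{thm:FHLQR} on horizon $T$ with time-invariant data and terminal cost $Q_T := P$. Because $P$ is a fixed point of the DARE~\eqref{eq:DARE}, the backward recursion~\eqref{eq:RDE} gives $P_t \equiv P$ and $H_t \equiv H$, so the FH gains~\eqref{eq:KxFH}--\eqref{eq:KvFH} collapse to the constant gains~\eqref{eq:KxIH}--\eqref{eq:KvIH}. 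For every $t \le T - p - 1$ the preview vector $\bar{v}_{t+1}$ from~\eqref{eq:vbarFH} (with $\hat{A}_j \equiv \hat{A}$) reduces exactly to the infinite-horizon $v_{t+1}$ in~\eqref{eq:vIH}, so the FH-optimal policy coincides with the proposed stationary policy~\eqref{eq:IHustar} on $[0, T-p-1]$ and differs only on an $O(p)$-length boundary near $t = T$.

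Next I would verify admissibility of the candidate and compute its average cost. Under~\eqref{eq:IHustar} the closed loop reads $x_{t+1} = \hat{A} x_t + (B_w - B_u K_w) w_t - B_u K_v v_{t+1}$; since $\hat{A}$ is Schur and both $w_t$ and $v_{t+1}$ have bounded second moment, standard linear-system bounds give uniform bounds on $\E[x_t]$ and $\E[x_t x_t^\top]$, so the candidate stabilizes the plant. Stationarity of $\{w_t\}$ together with the Schur property of $\hat{A}$ imply that the per-step expected cost $\E[x_t^\top Q x_t + u_t^{*\top} R u_t^*]$ converges to a constant $J^\star$ as $t \to \infty$, and a Ces\`aro argument then gives $\E[J_\infty(K^*, w, x_0) \mid i_0] = J^\star$, independent of $i_0$.

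For the matching lower bound I would use the FH cost as a reference. For any stabilizing $K$ and any horizon $T$, adding and subtracting the terminal term $x_T^\top P x_T$ yields
\begin{align*}
\E\!\left[\sum_{t=0}^{T-1} \bigl(x_t^\top Q x_t + u_t^\top R u_t\bigr) \,\middle|\, i_0\right]
\ge J^{*}_{\mathrm{FH},T}(i_0) - \E\!\left[x_T^\top P x_T \mid i_0\right],
\end{align*}
where $J^{*}_{\mathrm{FH},T}(i_0)$ is the FH optimal cost from the first paragraph. By the previous paragraph this optimum grows as $T J^\star + O(1)$, whereas the stabilizing assumption on $K$ forces $\E[x_T^\top P x_T \mid i_0]$ to remain bounded uniformly in $T$. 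Dividing by $T$ and letting $T \to \infty$ gives $\E[J_\infty(K, w, x_0) \mid i_0] \ge J^\star$, matching the upper bound from the preceding paragraph and proving both optimality of~\eqref{eq:IHustar} and the independence of $J_{\infty,p}^*(i_0)$ from $i_0$.

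The main obstacle will be the boundary mismatch in the FH approximation: the FH preview term uses the convention $w_j = 0$ for $j \ge T$, so the FH-optimal and the stationary policies disagree on the last $p$ time steps. This produces an $O(1)$ additive discrepancy in the total FH cost, which is harmless after the $1/T$ averaging but requires careful accounting to justify the ``grows as $T J^\star + O(1)$'' step. A secondary, more routine task is writing $J^\star$ in closed form by substituting $u_t^*$ into the per-step cost and taking expectations against the known statistics of $w_t$, $v_{t+1}$, and the stationary distribution of $x_t$.
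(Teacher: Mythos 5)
Your proposal is sound in outline but takes a genuinely different route from the paper. The paper never invokes Theorem~\ref{thm:FHLQR}: it works directly with the infinite-horizon per-step cost, substitutes $Q$ from the DARE~\eqref{eq:DARE} to complete the square, and rewrites $x_t^\top Q x_t + u_t^\top R u_t$ as $(u_t-u_t^*)^\top H (u_t-u_t^*)$ plus telescoping terms in $x_t^\top P x_t$ and $x_t^\top v_t$ (which vanish in the Ces\`aro average for stabilizing policies), plus terms with zero conditional mean (by independence of $w_{t+p}$ from $x_t$ and of $w_t$ from $v_{t+1}$), plus policy-independent terms. Optimality of $u_t^*$ and the closed-form cost~\eqref{eq:JinfFinal} then drop out in one pass. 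Your approach---freezing the Riccati iteration at the stabilizing DARE solution by taking terminal cost $Q_T:=P$, then sandwiching the average cost between the FH optimum and the candidate's cost---is the classical receding-horizon-with-terminal-cost argument; it buys reuse of Theorem~\ref{thm:FHLQR} and makes the connection between the FH and IH gains explicit, at the price of the $O(1)$ boundary bookkeeping you already flagged. One step you should make explicit in the lower bound: an arbitrary IH-stabilizing policy restricted to $[0,T-1]$ is not FH-admissible, because for $t>T-p-1$ it may use $w_T,\dots,w_{t+p}$, which the FH information set~\eqref{eq:FHinfo} truncates away; the inequality $\E[\text{FH cost of }K\mid i_0]\ge J^*_{\mathrm{FH},T}(i_0)$ therefore needs the observation that these extra disturbances are independent of the horizon-$T$ trajectory and cost, so conditioning on them cannot lower the optimum. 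With that addendum, and the routine verification that $\E[q_0\mid i_0]=TJ^\star+O(1)$, your argument goes through and additionally yields the independence of $J_{\infty,p}^*(i_0)$ from $i_0$ as a byproduct, just as the paper's does.
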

\begin{proof}  
Recall that the infinite horizon cost for any sequence of policies $K\coloneq \{K_0,K_1,\ldots\}$ is given by
\begin{align}
J_\infty(K,w,x_0)\coloneq  \lim_{T\to \infty} \frac{1}{T} 
    \sum_{t=0}^{T-1} g(x_t,u_t) ,
\end{align}
where $g(x_t,u_t)\coloneq x_t^\top Q x_t + u_t^\top R u_t$ is the per-step cost. The main part of the proof is to re-arrange the per-step cost into a form that involves $u_t-u_t^*$. This will be used to show that $u_t=u_t^*$ is the optimal input.

First, substitute for $Q$ using the DARE
\eqref{eq:DARE} and re-arrange to express the per-step cost as follows:
\begin{align*}
g(x_t, u_t)
= & (u_t+K_xx_t)^{\top}H(u_t+K_xx_t)
+x_t^{\top}Px_t 
-(Ax_t+B_uu_t)^{\top}P(Ax_t+B_uu_t).
\end{align*}
Use the LTI dynamics
\eqref{eq:LTIPlant} to replace
$Ax_t + B_u u_t$ by $x_{t+1}-B_w w_t$.
This yields
\begin{align*}        
g(x_t, u_t) =(u_t+K_xx_t)^{\top}H(u_t+K_xx_t) 
 +(x_t^{\top}Px_t- 
  x_{t+1}^{\top}Px_{t+1}) -w_t^{\top}B_w^{\top}PB_ww_t+2x_{t+1}^{\top}PB_w w_t.
\end{align*}
Next, use the definition
of $u_t^*$ in \eqref{eq:IHustar}
to replace $K_xx_t$
with $-u_t^*-K_ww_t-K_v v_{t+1}$.
The per-step cost is thus given by

{\small
\begin{align}
\label{eq:perstepwithlt}
g(x_t, u_t) =(u_t-u_t^*)^{\top}H(u_t-u_t^*)+(x_t^{\top}Px_t-x_{t+1}^{\top}Px_{t+1}) + l_t 
-w_t^{\top}B_w^{\top}PB_ww_t -(K_ww_t+K_vv_{t+1})^{\top}H(K_ww_t+K_vv_{t+1}),
\end{align}
}

\noindent
where
\begin{align*}
l_t \coloneq 
-2(K_ww_t+K_vv_{t+1})^{\top}H(u_t+K_xx_t)+2x_{t+1}^{\top}PB_ww_t.   
\end{align*}

We now focus on simplifying the term $l_t$. We have, from
\eqref{eq:KwIH} and \eqref{eq:KvIH}, that $HK_w=B_u^\top P B_w$ and $HK_v = B_u^\top$. Hence $l_t$ simplifies to:
\begin{align*}
l_t \coloneq 
-2(PB_ww_t+v_{t+1})^{\top}B_u(u_t+K_xx_t)
+ 2x_{t+1}^{\top}PB_ww_t.   
\end{align*}
We can replace $B_u(u_t+K_xx_t)$ with $x_{t+1}-\hat{A}x_t-B_ww_t$. Combining terms yields:
\begin{align*}
l_t \coloneq 
2(PB_ww_t+v_{t+1})^{\top}
(\hat{A}x_t+B_ww_t)
 -2 x_{t+1}^\top v_{t+1}.
\end{align*}
Note that the preview term $v_{t+1}$
defined in \eqref{eq:vIH} satisfies the following relation:
\begin{align} 
\hat{A}^\top \left( PB_w w_t + v_{t+1} \right)
= v_t + \left[ \hat{A}^\top \right]^{p+1} P B_w w_{t+p}.
\end{align}
We can use this to finally simplify $l_t$ to:
\begin{align*}
l_t =
2x_t^{\top}v_{t} 
+2x_t^{\top}\left[\hat{A}^\top \right]^{p+1}PB_ww_{t+p}
+ 2w_t^{\top}B_w^{\top}PB_ww_t
+2v_{t+1}^{\top}B_ww_t
-2x_{t+1}^{\top}v_{t+1}.
\end{align*}

Substitute this expression for $l_t$ back into the per-step cost \eqref{eq:perstepwithlt}. Re-organizing terms gives:
\begin{subequations}
\begin{align}
& g(x_t,u_t) =(u_t-u_t^*)^{\top}H(u_t-u_t^*) \\
&-(K_ww_t+K_vv_{t+1})^{\top}H(K_ww_t+K_vv_{t+1}) \\
\label{eq:TelescopingTerms}
&+(x_t^{\top}Px_t-x_{t+1}^{\top}Px_{t+1}) 
+2(x_t^{\top}v_{t}-x_{t+1}^{\top}v_{t+1}) \\
\label{eq:perstepTerm1}
&+2x_t^{\top}\left[ \hat{A}^\top \right]^{p+1}PB_ww_{t+p} \\           
\label{eq:perstepTerm2}
&+2v_{t+1}^{\top}B_ww_t\\
&+w_t^\top B_w^\top PB_ww_t.
\end{align}
\end{subequations}
The terms \eqref{eq:TelescopingTerms} form telescoping sums when inserted into the cost  $J_\infty(K,w,x_0)$.
The telescoping sums contribute zero to the cost as $T\to \infty$ since the policy is assumed to be stabilizing. The expectation of term \eqref{eq:perstepTerm2} is zero as $v_{t+1}$ depends on $\{w_{t+1},\dots,w_{t+p}\}$ and is independent of $w_t$. The expectation of term \eqref{eq:perstepTerm1} is also zero as $x_t$ depends on information $i_{t-1}=\{x_0,\dots,x_{t-1},w_0,\dots,w_{t+p-1}\}$, which is independent of $w_{t+p}$. Thus the cost  for any stabilizing policy simplifies to:

{\small
\begin{align}
\label{eq:JinfSimplified}   
J_{\infty,p}^*(i_0)= \min_{K \mbox{stabilizing}} \E\biggl[\lim_{T\to \infty} \frac{1}{T}\sum_{t=0}^{T-1} (u_t-u_t^*)^{\top}H(u_t-u_t^*)
-(K_ww_t+K_vv_{t+1})^{\top}H(K_ww_t+K_vv_{t+1})
 +w_t^\top B_w^\top PB_ww_t
\, \Big\vert \, i_0
\biggr].
\end{align}
}

\noindent
Only the first term depends on the  plant input and $H\succ 0$. Thus $J_{\infty,p}^*(i_0)$ is achieved by $u_t=u_t^*$. 
\end{proof}

The optimal IH cost is obtained by substituting $u_t=u_t^*$ into
\eqref{eq:JinfSimplified}. This yields:
\begin{align*}    
    J_{\infty,p}^*(i_0)=\lim_{T\to \infty} \frac{1}{T} \E\biggl[\sum_{t=0}^{T-1} 
    w_t^\top B_w^\top PB_ww_t 
 -(K_ww_t+K_vv_{t+1})^{\top}H(K_ww_t+K_vv_{t+1})
\, \Big\vert \, i_0
\biggr].
\end{align*}
This can be simplified further using the definition of $v_{t+1}$ in \eqref{eq:vIH} combined with
$\E[w_iw_j^{\top}]=\delta_{ij} \, I$.  This gives: 
\begin{align}    
\label{eq:JinfFinal}   
    J_{\infty,p}^*=   \mbox{trace} \left[ 
    P B_w B_w^\top  \right]  
    - \mbox{trace} \left[ \sum_{j=0}^p H K_v (\hat{A}^\top)^j P B_w B_w^\top P \hat{A}^j K_v^\top \right].     
\end{align}
Here we have used $K_w = K_vPB_w$ to simplify the expression further. Note that the optimal IH cost does not depend on the initial condition, i.e. $J_{\infty,p}^*(i_0) = J_{\infty,p}^*$.  The first term in the optimal cost, trace$[PB_wB_w^\top]$, is the cost that would be achieved using only state feedback. The second term gives the cost reduction obtained by using the current disturbance measurement and $p$ steps of preview.

\subsection{Relation to Optimal Noncausal Controller}
\label{sec:NCresult}

This section discusses the optimal noncausal controller with full knowledge of the (past, current and future) values of the disturbance.  
We focus on the IH case where the
controller has access to the following
information at time $t$:
\begin{align}
\label{eq:NCinfo}
    i_t & \coloneq  \{x_0,\dots,x_t,w_0, w_1,\ldots\}. 
\end{align}
The disturbance is fully known so we consider deterministic policies
$K\coloneq \{k_0,k_1,\ldots\}$ where $u_t=k_t(i_t)$. Moreover, we assume $w\in \ell_2$ and the (deterministic) cost is:
\begin{align}
\label{eq:JinfNC}
\sum_{t=0}^\infty x_t^\top Q x_t + u_t^\top R u_t.
\end{align}
Here the cost is deterministic (in contrast to the expected costs defined previously).
The next problem states the IH design with full preview.

\begin{problem}[IH LQR With Full Preview]
\label{prob:NCLQR}
Consider the LTI plant \eqref{eq:LTIPlant} with $A$ nonsingular, $(A,B)$ stabilizable and cost function in \eqref{eq:JinfNC} with $Q\succeq 0$, $R \succ 0$ and $(A,Q)$ detectable. Also assume $w\in \ell_2$. Find a sequence of deterministic policies $K\coloneq \{k_0,k_1\dots\}$ 
to stabilize the plant and solve:
\begin{align}
    \min_{K \mbox{stabilizing}} \sum_{t=0}^\infty x_t^\top Q x_t + u_t^\top R u_t.
\end{align} 
The policy $k_t$ at time $t$ uses information $i_t$ in \eqref{eq:NCinfo} and includes full preview of the disturbance.
\end{problem}

A solution for the optimal noncausal controller is given in Theorem 11.2.1 of
\cite{hassibi99}. Related  noncausal results (both FH and IH) are given in \cite{sabag21ACC,sabag21arXiv,sabag22arXiv,goel20arXiv,goel22CDC} where the noncausal controller is used as a baseline for regret-based control design.  The next result gives the IH noncausal controller in a form/notation that closely aligns with the finite preview controller in Theorem~\ref{thm:IHLQR}. 

\begin{theorem}
\label{thm:IHNoncausal}
Consider IH LQR with full preview 
including the assumptions stated in Problem~\ref{prob:NCLQR}. Define the
feedback gains $(K_x,K_w,K_v)$
in \eqref{eq:KxIH}-\eqref{eq:KvIH}
using the solution of DARE \eqref{eq:DARE}.
The sequence of policies that achieve the minimum are deterministic and have the form: 
\begin{align}
\label{eq:NCustar}
  u_t^{nc}=-K_x x_t-K_w w_t-K_v v^{nc}_{t+1},  
\end{align}
where $v^{nc}_{t+1}$ depends on $\{w_{t+1},w_{t+2},\ldots \}$ and is given by the following anticausal dynamics:
\begin{align}
\label{eq:vNC}
\begin{split}
& v^{nc}_t = \hat{A}^\top (v^{nc}_{t+1} + P B_w w_t),
\,\, v^{nc}_\infty = 0 \\
& \mbox{ with }
\hat{A}\coloneq A-B_u K_x.
\end{split}
\end{align}
\end{theorem}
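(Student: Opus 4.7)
The plan is to mirror the per-step cost manipulation from the proof of Theorem~\ref{thm:IHLQR}, adapted to the deterministic $\ell_2$ setting with full rather than $p$-step preview. The anticausal recursion for $v_t^{nc}$ will play the role that the finite sum \eqref{eq:vIH} played there, and the key payoff is that the finite-preview boundary residual now vanishes.

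First I would observe that running \eqref{eq:vNC} backward from $v_\infty^{nc}=0$ admits the explicit solution
\begin{equation*}
v_t^{nc} = \sum_{j=t}^{\infty} (\hat{A}^\top)^{j-t+1} P B_w w_j,
\end{equation*}
which is well-defined because $\hat{A}$ is Schur by Lemma~\ref{lem:DARE} and $w\in\ell_2$. This is the formal $p\to\infty$ limit of \eqref{eq:vIH}. The identity the rest of the proof hinges on is
\begin{equation*}
\hat{A}^\top\bigl(P B_w w_t + v_{t+1}^{nc}\bigr) = v_t^{nc},
\end{equation*}
which is simply \eqref{eq:vNC}; crucially, there is no counterpart to the finite-preview remainder $(\hat{A}^\top)^{p+1}P B_w w_{t+p}$.

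Next I would repeat the algebraic steps in the proof of Theorem~\ref{thm:IHLQR} verbatim: substitute for $Q$ via DARE~\eqref{eq:DARE}, use the dynamics to replace $A x_t+B_u u_t$ by $x_{t+1}-B_w w_t$, and use \eqref{eq:NCustar} to eliminate $K_x x_t$ in favor of $-u_t^{nc}-K_w w_t-K_v v_{t+1}^{nc}$. This yields a decomposition
\begin{equation*}
g(x_t,u_t)=(u_t-u_t^{nc})^\top H(u_t-u_t^{nc})+\bigl(x_t^\top P x_t-x_{t+1}^\top P x_{t+1}\bigr)+l_t+(\text{terms independent of }u_t),
\end{equation*}
with $l_t$ of exactly the same form as before. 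Applying the identity above in place of its finite-preview analogue collapses $l_t$ into the clean telescoping pair $2\bigl(x_t^\top v_t^{nc}-x_{t+1}^\top v_{t+1}^{nc}\bigr)$ together with disturbance-only cross terms, with no leftover $(\hat{A}^\top)^{p+1}$ term to worry about.

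Finally I would sum from $t=0$ to $T-1$ and send $T\to\infty$. The main technical obstacle, and the only substantive deviation from Theorem~\ref{thm:IHLQR}, is disposing of the boundary quantities $x_T^\top P x_T$ and $x_T^\top v_T^{nc}$: here the cost is summed rather than averaged, so I need these to vanish outright, not merely be divided by $T$. Because $w\in\ell_2$, $\hat{A}$ is Schur, and any stabilizing policy with finite cost must produce $u\in\ell_2$, the closed-loop state $x_t$ lies in $\ell_2$ and thus $x_T\to 0$; combined with $v_T^{nc}\to 0$ from the explicit formula above, both boundary terms drop out. The disturbance-only terms sum to a finite quantity independent of $K$. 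Since $H\succ 0$, the deterministic cost is minimized pointwise by $u_t=u_t^{nc}$, which is itself stabilizing because it perturbs the stabilizing feedback $-K_x x_t$ by the $\ell_2$ signal $-K_w w_t-K_v v_{t+1}^{nc}$. This completes the proof.
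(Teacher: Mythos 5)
Your proposal is essentially correct, but it takes a different route from the paper for a simple reason: the paper does not prove Theorem~\ref{thm:IHNoncausal} at all. It states the result and defers to Theorem~1 of \cite{liu2024robust}, which covers a more general cost (with a cross term $2x_t^\top S w_t$) on a doubly-infinite time axis. What you propose instead is a self-contained completion-of-squares argument that reuses the machinery of Theorem~\ref{thm:IHLQR} verbatim, with the finite-preview remainder $(\hat{A}^\top)^{p+1}PB_w w_{t+p}$ replaced by an exact telescoping identity $\hat{A}^\top(PB_w w_t + v_{t+1}^{nc}) = v_t^{nc}$. This is a legitimate and arguably preferable route within this paper: it keeps the noncausal result in the same notation and proof template as the finite-preview result, and it makes transparent exactly which term of the finite-preview decomposition vanishes in the full-preview limit. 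The citation buys generality (cross terms, two-sided signals); your argument buys self-containedness. Your explicit formula for $v_t^{nc}$ and the verification that it solves \eqref{eq:vNC} with $v_\infty^{nc}=0$ also duplicates, correctly, the first half of the paper's proof of Theorem~\ref{thm:ControllerConv}.

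One step deserves tightening. To kill the boundary term $x_T^\top P x_T$ for an \emph{arbitrary} competitor policy, you assert that finite cost forces $x\in\ell_2$ ``because $w\in\ell_2$, $\hat{A}$ is Schur, and $u\in\ell_2$.'' The Schur property of $\hat{A}$ is a fact about the \emph{optimal} closed loop and is irrelevant to an arbitrary competitor. The correct chain is: finite cost and $R\succ 0$ give $u\in\ell_2$ and $Q^{1/2}x\in\ell_2$; then detectability of $(A,Q)$ together with $u,w\in\ell_2$ gives $x\in\ell_2$, hence $x_T\to 0$. This is the one place the standing detectability assumption is actually needed, so it should be invoked explicitly rather than glossed. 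With that repair the argument is complete.
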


This result, as stated, is a special case of Theorem 1 in \cite{liu2024robust}.  It was stated in \cite{liu2024robust} for a more general LQR cost including a cross term $2x_t^\top S w_t$.\footnote{The proof given in \cite{liu2024robust} was for signals defined for $t$ from $-\infty$ to $+\infty$. However, the optimal noncausal controller is the same for signals defined from $t=0$ to $\infty$.}  Note that the full preview design (Problem~\ref{prob:NCLQR}) was stated with a deterministic formulation while finite preview design (Problem~\ref{prob:IHLQR})  was derived with stochastic formulation. 
However, we show next that the controller with finite preview $p<\infty$ converges, in the limit as $p\to \infty$, to the optimal noncausal controller for any fixed disturbance $w\in \ell_2$.

\begin{theorem}
\label{thm:ControllerConv}
Let $w\in \ell_2$ be any given (deterministic) disturbance sequence.  Then the optimal noncausal controller is given by
$u_t^{nc}$ in \eqref{eq:NCustar} where $v^{nc}_{t+1}$ can be expressed as:
\begin{align}
\label{eq:vNC2}
v^{nc}_{t+1} = \sum_{j=t+1}^\infty
\left[ \hat{A}^\top \right]^{j-t}
P B_w w_j
\end{align}
Moreover, let $u_t^p$ denote the optimal finite preview controller given in \eqref{eq:IHustar} with $p<\infty$ steps of preview.  Then
$\|u_t^{nc}-u_t^p\|_2 \to 0$ uniformly in $t$ as $p\to \infty$.
\end{theorem}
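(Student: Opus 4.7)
The plan is to attack the two assertions separately: first unroll the anticausal recursion \eqref{eq:vNC} into the explicit sum \eqref{eq:vNC2}, then estimate the tail of that series to get uniform convergence of $u_t^p$ to $u_t^{nc}$.

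For the first part, I would iterate the recursion $v^{nc}_t = \hat A^\top(v^{nc}_{t+1}+PB_w w_t)$ a finite number of steps to obtain, for any $N\ge t$,
\begin{equation*}
v^{nc}_t = \sum_{j=t}^{N-1} \bigl[\hat A^\top\bigr]^{j-t+1} P B_w w_j + \bigl[\hat A^\top\bigr]^{N-t} v^{nc}_N.
\end{equation*}
Since $K_x$ is stabilizing by Lemma~\ref{lem:DARE}, $\hat A$ is Schur, so there exist constants $c\ge 1$ and $\rho\in(0,1)$ with $\|\hat A^k\|\le c\rho^k$ for all $k\ge 0$. Combined with $w\in\ell_2$ (so in particular $\|w_N\|\to 0$ and the partial tails of $\sum \|w_j\|^2$ vanish), this shows that the remainder term vanishes and the series converges absolutely. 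A Cauchy–Schwarz estimate
\begin{equation*}
\sum_{j=t}^{\infty} c\rho^{j-t+1}\|PB_w\|\,\|w_j\| \le c\rho\,\|PB_w\|\,\frac{1}{\sqrt{1-\rho^2}}\,\|w\|_{\ell_2}
\end{equation*}
confirms convergence and (after re-indexing) yields \eqref{eq:vNC2}. The boundary condition $v^{nc}_\infty=0$ is consistent with this construction, and uniqueness of the anticausal solution then gives that \eqref{eq:vNC2} is \emph{the} $v^{nc}_{t+1}$ appearing in \eqref{eq:NCustar}.

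For the second part, subtract \eqref{eq:IHustar} from \eqref{eq:NCustar}: since $K_x$, $K_w$, and the $x_t$, $w_t$ terms are identical in both expressions, the difference reduces to
\begin{equation*}
u_t^{nc}-u_t^p = -K_v\bigl(v^{nc}_{t+1}-v_{t+1}\bigr)
= -K_v \sum_{j=t+p+1}^{\infty}\bigl[\hat A^\top\bigr]^{j-t} P B_w w_j.
\end{equation*}
Apply the geometric bound on $\|\hat A^k\|$ together with Cauchy–Schwarz:
\begin{equation*}
\|u_t^{nc}-u_t^p\|_2 \le \|K_v\|\,\|PB_w\|\,c \sum_{j=t+p+1}^\infty \rho^{j-t}\|w_j\|
\le \frac{c\,\|K_v\|\,\|PB_w\|}{\sqrt{1-\rho^2}}\,\rho^{p+1}\,\|w\|_{\ell_2}.
\end{equation*}
The right-hand side is independent of $t$ and tends to zero as $p\to\infty$, establishing uniform convergence.

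The argument is largely routine; the only step that requires some care is justifying the tail estimate uniformly in $t$. The key ingredients are (i) $\hat A$ being Schur, which is supplied by Lemma~\ref{lem:DARE}, and (ii) the $\ell_2$ hypothesis on $w$, which lets Cauchy–Schwarz produce a constant bound depending only on $\|w\|_{\ell_2}$ rather than on the shifted tail $\sum_{j\ge t+p+1}\|w_j\|^2$. If one tried to work only with the bound $\|w_j\|\le\|w\|_{\ell_2}$, the tail sum would be $O(\rho^p)$ but with a worse constant; either version suffices for uniform convergence, so I do not anticipate a serious obstacle.
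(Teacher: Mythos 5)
Your first part is fine and matches the paper's argument in substance: unroll the anticausal recursion, use that $\hat{A}$ is Schur together with $w\in\ell_2$ to kill the remainder term, and invoke uniqueness of the anticausal solution to identify the sum with $v^{nc}_{t+1}$. Your tail estimate $\|v_{t+1}^{nc}-v_{t+1}^p\|_2\lesssim \rho^{p+1}\|w\|_{\ell_2}$, uniform in $t$, is also correct and is one of the two ingredients needed for the second part.

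The gap is in the second part, at the step ``since $K_x$, $K_w$, and the $x_t$, $w_t$ terms are identical in both expressions, the difference reduces to $u_t^{nc}-u_t^p=-K_v(v^{nc}_{t+1}-v_{t+1}^p)$.'' The two controllers are run in closed loop on the same plant with the same $x_0$ and the same $w$, but they apply different inputs, so the resulting state trajectories $x_t^{nc}$ and $x_t^p$ are \emph{not} identical for $t\ge 1$; the $-K_x x_t$ terms do not cancel. The correct decomposition is $u_t^{nc}-u_t^p=-K_x(x_t^{nc}-x_t^p)-K_v(v_{t+1}^{nc}-v_{t+1}^p)$, and you must control the accumulated state mismatch. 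The paper does this by observing that $x_{t+1}^{nc}-x_{t+1}^p=\hat{A}(x_t^{nc}-x_t^p)-B_uK_v(v_{t+1}^{nc}-v_{t+1}^p)$ with $x_0^{nc}-x_0^p=0$, so that $x_t^{nc}-x_t^p=-\sum_{j=1}^t\hat{A}^{t-j}B_uK_v(v_j^{nc}-v_j^p)$; the resulting extra term $K_x\sum_{j=1}^t\hat{A}^{t-j}B_uK_v(v_j^{nc}-v_j^p)$ is then bounded uniformly in $t$ by combining your tail estimate with the summability $\sum_{j=0}^\infty\|\hat{A}^j\|_{2\to 2}<\infty$, which follows from Gelfand's formula since $\hat{A}$ is Schur. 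With that convolution bound added your estimate closes and still yields an $O(\rho^{p+1})$ rate uniform in $t$; without it, the claimed identity for $u_t^{nc}-u_t^p$ is false and the proof does not go through.
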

\begin{proof}
We will first verify that \eqref{eq:vNC2} holds. Specifically, we will show that $v_{t+1}^{nc}$ defined by \eqref{eq:vNC} is equal to the following signal:
\begin{align}
   y_{t+1} = \sum_{j=t+1}^\infty
    \left[ \hat{A}^\top \right]^{j-t}
    P B_w w_j
\end{align}
It can be verified, by direct substitution, that $y_{t+1}$ satisfies the state-space recursion in \eqref{eq:vNC}. Moreover, 
$y_{t+1}$ satisfies the boundary condition $y_\infty=0$ because $w\in \ell_2$ and $\hat{A}$ is a Schur matrix.\footnote{We can bound $y_{t+1}$  by $\|y_{t+1}\|_2 \le a \cdot b_t$ where $a:=\left\| \sum_{l=1}^\infty
[ \hat{A}^\top]^l P B_w \right\|_{2\to 2}$ and $b_t:=
\max_{j\ge t+1} \| w_j\|_2$.
We have $a<\infty$ because $\hat{A}$ is a Schur matrix. 
Moreover, $b_t\to 0$ as $t\to\infty$ because $w\in \ell_2$. Hence $y_{t+1}\to 0$ as $t\to \infty$.}
Thus $y_{t+1}$ is the unique solution to the noncausal dynamics in \eqref{eq:vNC} and hence \eqref{eq:vNC2} holds.

Next, we show that $\|u_t^{nc}-u_t^p\|_2 \to 0$ uniformly in $t$ as $p\to \infty$.  Let $x_t^{nc}$ and $x_t^p$ denote the response of the LTI system
\eqref{eq:LTIPlant} with some initial condition $x_0$ and disturbance $w\in\ell_2$ using the noncausal and $p$-step finite preview controller, respectively.
The difference between the two commands is:
\begin{align}
\label{eq:diffu}
   u_t^{nc} - u_t^p = -K_x(x_t^{nc}-x_t^p)
    -K_v (v_{t+1}^{nc}-v_{t+1}^p).
\end{align}
Next, note that the states satisfy the following recursion:
\begin{align*}
x_{t+1}^{nc} - x_{t+1}^p & = 
A \, (x_t^{nc} - x_t^p ) +  B_u \, (u_t^{nc} - u_t^p ) \\
& = \hat{A} (x_t^{nc} - x_t^p ) -  B_u K_v \, (v_{t+1}^{nc} - v_{t+1}^p ),
\end{align*}
where we have used \eqref{eq:diffu} to substitute for $u_t^{nc}-u_t^p$. Iterating from $x_0^{nc}-x_0^p = 0$ (since the initial conditions match) gives:
\begin{align*}
x_t^{nc} - x_t^p & = -\sum_{j=1}^t \hat{A}^{t-j} B_u K_v (v_j^{nc} - v_j^p ).
\end{align*}
Substitute this back into \eqref{eq:diffu} to express the difference in the control commands as follows:
\begin{align}
\label{eq:diffu2}
\begin{split}    
   u_t^{nc} - u_t^p =
     -K_v (v_{t+1}^{nc}-v_{t+1}^p) 
     +K_x \sum_{j=1}^t \hat{A}^{t-j} B_u K_v (v_j^{nc} - v_j^p ).
\end{split}
\end{align}
To bound the right side, we use \eqref{eq:vNC2} to
rewrite $(v_{t+1}^{nc}-v_{t+1}^p)$ in terms of the disturbance:
\begin{align}
    v_{t+1}^{nc}-v_{t+1}^p= \sum_{l=p+1}^\infty
( \hat{A}^\top)^{l} P B_w w_{l+t}.
\end{align}
Thus,
$\| v_{t+1}^{nc}-v_{t+1}^p \|_2  
\le a_p \cdot b$ for $t\geq0$, where
\begin{align*}
 a_p  :=\left\| \sum_{l=p+1}^\infty
[ \hat{A}^\top]^{l} P B_w \right\|_{2\to 2} \mbox{ and }
b  := \max_{j} \| w_j\|_2.   
\end{align*}
The assumption $w\in \ell_2$ implies that $b$ is finite.  Moreover, $\hat{A}$ is a Schur matrix and hence $a_p\to 0$ as $p\to \infty$.\footnote{The sum 
$\sum_{l=p+1}^\infty
[ \hat{A}^\top]^{l}$ converges to $(I-\hat{A}^\top)^{-1} \, (\hat{A}^\top)^{p+1}$. 
Thus,
\begin{align*}
a_p \le \| (I-\hat{A}^\top)^{-1} \|_{2\to 2}
\cdot
\|(\hat{A}^\top)^{p+1} \|_{2\to 2}
\cdot
\| P B_w \|_{2 \to 2}.
\end{align*}
We have $(\hat{A}^\top)^{p+1}\to 0$ as $p\to \infty$ and hence $a_p\to 0$. }

Finally, use this bound and Equation~\ref{eq:diffu2} to bound the difference in control commands:
\begin{align}
    \|u_t^{nc} - u_t^p\|_2 \le a_p b \left[ 
    c_1 + c_2
    \cdot \sum_{j=0}^{t-1}\| \hat{A}^j \|_{2\to 2}
    \right],
\end{align}
where $c_1:=\|K_v\|_{2\to 2}$ and 
$c_2:=\|K_x \|_{2\to 2} \cdot \|B_u K_v\|_{2\to 2}$
are finite constants. 
We can choose any $\alpha\in(\rho(\hat{A}),\,1)$ and $\exists \, T$ such that $\left\|\hat{A}^j\right\|\leq\alpha^j$
for all $j\ge T$. This follows from
Gelfand's formula (Corollary 5.6.14 in \cite{horn2013matrix}). Therefore, the sum $\sum_{j=0}^\infty\| \hat{A}^j \|_{2\to 2}$ converges to some finite number $c_3$.   Thus we can bound the difference in control commands as:
\begin{align}
\|u_t^{nc} - u_t^p\|_2 \le a_p b \, (c_1+c_2 \, c_3),  
\end{align}
where $b$, $c_1$, $c_2$, and $c_3$ are all finite constants. Moreover $a_p \to 0$ as $p\to \infty$.
This implies that $\| u_t^{nc} - u_t^p \|_2 \to 0$ uniformly in $t$ as $p\to \infty$.

    
\end{proof}

Finally, we can derive a related convergence result for the costs achieved with finite preview and full, noncausal preview. 
The optimal noncausal controller depends on the specific (deterministic) disturbance sequence and assumes $w\in \ell_2$ to ensure the cost \eqref{eq:JinfNC} is bounded. However, we can apply the noncausal controller to stochastic disturbances $w$ that are drawn from the distribution of IID, zero mean, unit variance sequences.  The expected per-step cost for the optimal noncausal controller is:
\begin{align}
\label{eq:JinfExpNC}
    J_{\infty,nc}^*(x_0)\coloneq \E\left[ J_\infty(K_{nc},w,x_0) \, | \, x_0 \right],
\end{align}
where $K_{nc}$ is the optimal noncausal
controller \eqref{eq:NCustar} for a specific  sequence $w$.  The expected per-step cost for the noncausal controller is averaged over stochastic signals $w$. We show below that it does not depend on the initial state, i.e. $J_{\infty,nc}^*(x_0)=J_{\infty,nc}^*$. The next result is that the expected per-step cost with the optimal finite preview controller converges to the cost achieved by the optimal noncausal controller. We also give a simple expression to compute the optimal cost achieved by the noncausal controller.


\begin{theorem}
\label{thm:CostConv}
Let $w$ be an IID sequence with $\E[w_i]=0$ and $\E[w_iw_j^{\top}]=\delta_{ij} \, I$ for $i,j\in \{0,1,\ldots\}$.  The expected per-step cost for the optimal noncausal controller is:
\begin{align}    
\label{eq:JinfNCSimple}
    J_{\infty,nc}^*=  & \mbox{trace} \left[ 
    P B_w B_w^\top  \right] 
    - \mbox{trace} \left[ 
    H K_v X K_v^\top \right]. 
\end{align}
where $X$ is the solution to the following discrete-time Lyapunov equation:
\begin{align}
\label{eq:NCLyap}
    \hat{A}^\top X \hat{A} - X
    + P B_w B_w^\top P = 0
\end{align}
Moreover, let $J_{\infty,p}^*$ denote the optimal expected per-step cost in
\eqref{eq:JinfFinal} achieved by the optimal IH controller with $p<\infty$ steps of preview.  Then $J_{\infty,p}^* \to J_{\infty, nc}^*$ as $p\to \infty$.
\end{theorem}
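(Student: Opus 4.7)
The plan is to adapt the per-step cost decomposition from the proof of Theorem~\ref{thm:IHLQR} to the noncausal controller $u_t^{nc}$, take a Cesaro-averaged expectation to isolate the persistent contributions, and then recognize the residual quadratic form as $\mbox{trace}(HK_vXK_v^\top)$ with $X$ solving the Lyapunov equation~\eqref{eq:NCLyap}. The cost-convergence claim then follows by comparing the resulting closed-form expression with \eqref{eq:JinfFinal}.

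First, I would replay the algebra leading to \eqref{eq:perstepwithlt} verbatim, but with $u_t^*$ replaced by $u_t^{nc}$ and $v_{t+1}$ replaced by $v_{t+1}^{nc}$. The only manipulation that changes is the simplification of $l_t$: the finite-preview identity $\hat{A}^\top(PB_w w_t + v_{t+1}) = v_t + [\hat{A}^\top]^{p+1} PB_w w_{t+p}$ becomes, in the noncausal case, the cleaner recursion $\hat{A}^\top(PB_w w_t + v_{t+1}^{nc}) = v_t^{nc}$, which is just \eqref{eq:vNC} rewritten. Consequently the boundary contribution corresponding to \eqref{eq:perstepTerm1} drops out. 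Setting $u_t = u_t^{nc}$ makes the quadratic $(u_t - u_t^{nc})^\top H (u_t - u_t^{nc})$ vanish, and the per-step cost reduces to two telescoping differences $(x_t^\top P x_t - x_{t+1}^\top P x_{t+1}) + 2(x_t^\top v_t^{nc} - x_{t+1}^\top v_{t+1}^{nc})$, the cross term $2(v_{t+1}^{nc})^\top B_w w_t$, and the residual $w_t^\top B_w^\top PB_w w_t - (K_w w_t + K_v v_{t+1}^{nc})^\top H(K_w w_t + K_v v_{t+1}^{nc})$.

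Second, I would take the Cesaro-averaged expectation. The cross term vanishes because $v_{t+1}^{nc}$, being a measurable function of $\{w_{t+1},w_{t+2},\ldots\}$ only, is independent of $w_t$. For the telescoping terms I need uniform-in-$t$ bounds on $\E[\|x_t\|^2]$ and $\E[\|v_t^{nc}\|^2]$, both of which follow from $\hat{A}$ being Schur: the series \eqref{eq:vNC2} combined with $\E[w_iw_j^\top]=\delta_{ij}I$ gives a uniformly bounded covariance for $v_t^{nc}$, and the closed-loop recursion $x_{t+1} = \hat{A} x_t + (B_w - B_u K_w) w_t - B_u K_v v_{t+1}^{nc}$ then propagates this into a uniformly bounded covariance for $x_t$. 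A Cauchy--Schwarz estimate then shows that $\frac{1}{T}\E[x_T^\top P x_T]$ and $\frac{1}{T}\E[x_T^\top v_T^{nc}]$ both vanish as $T\to\infty$, leaving
\[ J_{\infty,nc}^* = \mbox{trace}(PB_w B_w^\top) - \E\bigl[(K_w w_t + K_v v_{t+1}^{nc})^\top H (K_w w_t + K_v v_{t+1}^{nc})\bigr]. \]

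Third, I would reduce the remaining expectation to $\mbox{trace}(HK_v X K_v^\top)$. Using $K_w = K_v PB_w$ and independence of $w_t$ from $v_{t+1}^{nc}$, the expectation equals $\mbox{trace}\bigl(HK_v(PB_w B_w^\top P + Y)K_v^\top\bigr)$ with $Y := \E[v_{t+1}^{nc}(v_{t+1}^{nc})^\top] = \sum_{j=1}^\infty [\hat{A}^\top]^j PB_w B_w^\top P \hat{A}^j$ by \eqref{eq:vNC2} and the IID assumption, so $PB_w B_w^\top P + Y = \sum_{j=0}^\infty [\hat{A}^\top]^j PB_w B_w^\top P \hat{A}^j$. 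This sum converges because $\hat{A}$ is Schur and is easily verified to be the unique solution $X$ of \eqref{eq:NCLyap}, which establishes \eqref{eq:JinfNCSimple}. For the convergence $J_{\infty,p}^* \to J_{\infty,nc}^*$, one simply compares \eqref{eq:JinfFinal} with \eqref{eq:JinfNCSimple}: the two expressions differ only through the truncated sum $X_p := \sum_{j=0}^p [\hat{A}^\top]^j PB_w B_w^\top P \hat{A}^j$ versus $X$, and Schur stability of $\hat{A}$ gives $X_p \to X$ in norm, after which continuity of the trace closes the argument. The main obstacle is the Step~2 bookkeeping for the vanishing telescoping Cesaro averages, since this requires carefully propagating the uniform second-moment bounds for $x_t$ and $v_t^{nc}$ through the closed loop driven by the noncausal policy; all other steps are direct transcriptions of the manipulations already carried out for Theorem~\ref{thm:IHLQR}, combined with standard Schur-matrix and discrete-time Lyapunov facts.
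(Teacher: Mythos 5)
Your proposal is correct and follows essentially the same route as the paper: the paper's proof likewise reruns the per-step decomposition from Theorem~\ref{thm:IHLQR} for the noncausal controller to obtain $J_{\infty,nc}^* = \mbox{trace}[PB_wB_w^\top] - \mbox{trace}\bigl[\sum_{j=0}^\infty HK_v(\hat{A}^\top)^j PB_wB_w^\top P\hat{A}^j K_v^\top\bigr]$, identifies the infinite sum with the Lyapunov solution $X$, and reads off the convergence from the tail $\sum_{j=p+1}^\infty$ being driven to zero by Schur stability of $\hat{A}$. The only difference is that you explicitly carry out the steps (the clean noncausal recursion for $l_t$, the uniform second-moment bounds that kill the telescoping Cesaro averages, and the covariance computation for $PB_ww_t + v_{t+1}^{nc}$) that the paper compresses into ``the steps in the proof of Theorem~\ref{thm:IHLQR} can be used.''
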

\begin{proof}
The steps in the proof of Theorem~\ref{thm:IHLQR} can be used to show that the noncausal cost is given by:
\begin{align}    
    J_{\infty,nc}^*=  & \mbox{trace} \left[ 
    P B_w B_w^\top  \right]  
     - \mbox{trace} \left[ \sum_{j=0}^\infty H K_v (\hat{A}^\top)^j P B_w B_w^\top P \hat{A}^j K_v^\top \right].  
\end{align}
The infinite sum converges because $\hat{A}$ is a Schur matrix by Lemma~\ref{lem:DARE}. Moreover, the difference between the optimal finite preview and noncausal costs is:
\begin{align}    
\label{eq:Jdiff}
J_{\infty,p}^*- J_{\infty,nc}^*= 
\mbox{trace} \left[ \sum_{j=p+1}^\infty H K_v (\hat{A}^\top)^j P B_w B_w^\top P \hat{A}^j K_v^\top \right]. 
\end{align}
The convergence $J_{\infty,p}^*\to J_{\infty, nc}^*$ follows because $\hat{A}$ is a Schur matrix.
Finally, define $X:=\sum_{j=0}^\infty  (\hat{A}^\top)^j P B_w B_w^\top P \hat{A}^j$.   $X$ is the solution of the discrete-time Lyapunov equation 
\eqref{eq:NCLyap} (See Section 21.1 in \cite{Zhou1996Robust}). This yields the expression for the optimal cost in \eqref{eq:JinfNCSimple}.
\end{proof}

A consequence of the previous results is that the optimal
finite preview cost converges geometrically to the optimal noncausal cost as $p\to \infty$. This is formally stated next.
        
\begin{corollary}
    \label{cor:CostBount}
Let $w$ be an IID sequence with $\E[w_i]=0$ and $\E[w_iw_j^{\top}]=\delta_{ij} \, I$ for $i,j\in \{0,1,\ldots\}$. The difference between the optimal finite
preview and noncausal costs is:
\begin{align}    
\label{eq:JinfDifference}
    J_{\infty,p}^*-J_{\infty,nc}^*= \mbox{trace} \left[ Y (\hat{A}^\top)^{p+1} X  (\hat{A})^{p+1}  \right]. 
\end{align}
where $Y \coloneq K_v^\top H K_v \succeq 0$.
This difference is bounded by:
\begin{align}
    \label{eq:JinfErrorBound}
    \lambda_{\text{min}}(X^\frac{1}{2})^2\lambda_{\text{min}}(Y^\frac{1}{2})^2\|\hat{A}^{p+1}\|_F^2 \leq J_{\infty,p}^*-J_{\infty,nc}^*
\leq \|X^\frac{1}{2}\|_F^2 \|Y^\frac{1}{2}\|_F^2 \|\hat{A}^{p+1}\|_F^2.
\end{align}
\end{corollary}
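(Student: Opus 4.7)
The plan is to start from the closed-form expression for $J_{\infty,p}^* - J_{\infty,nc}^*$ already derived in equation \eqref{eq:Jdiff} of Theorem~\ref{thm:CostConv}, namely
\begin{align*}
J_{\infty,p}^* - J_{\infty,nc}^* = \mathrm{trace}\left[\sum_{j=p+1}^\infty H K_v (\hat{A}^\top)^j P B_w B_w^\top P \hat{A}^j K_v^\top\right],
\end{align*}
and massage it into the compact form \eqref{eq:JinfDifference}. First I would apply the cyclic property of trace to move $K_v^\top H K_v = Y$ to the front of the summand. Next, I would reindex with $k = j-(p+1)$ and factor $(\hat{A}^\top)^{p+1}$ on the left and $\hat{A}^{p+1}$ on the right of the sum, so that the remaining inner sum $\sum_{k=0}^\infty (\hat{A}^\top)^k P B_w B_w^\top P \hat{A}^k$ is exactly the Lyapunov solution $X$ from \eqref{eq:NCLyap} (this is the standard series representation of the solution, valid because $\hat{A}$ is Schur by Lemma~\ref{lem:DARE}). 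This yields the stated equality.

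For the two-sided bound in \eqref{eq:JinfErrorBound}, the main idea is to rewrite the trace as a squared Frobenius norm. Using the PSD square roots $X^{1/2}$ and $Y^{1/2}$ together with the trace identity $\mathrm{trace}(M^\top M) = \|M\|_F^2$, I obtain
\begin{align*}
\mathrm{trace}\!\left[Y (\hat{A}^\top)^{p+1} X \hat{A}^{p+1}\right]
= \bigl\|X^{1/2}\, \hat{A}^{p+1}\, Y^{1/2}\bigr\|_F^2.
\end{align*}
The upper bound then follows immediately from submultiplicativity of the Frobenius norm, $\|ABC\|_F \le \|A\|_F \|B\|_F \|C\|_F$, applied to $A = X^{1/2}$, $B = \hat{A}^{p+1}$, $C = Y^{1/2}$.

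For the lower bound, I would use the dual fact that for any matrix $M$ and any compatible matrix $N$, $\|NM\|_F \ge \sigma_{\min}(N)\|M\|_F$ (and the analogous right-multiplication version). Since $X^{1/2}$ and $Y^{1/2}$ are symmetric PSD, their minimum singular values equal $\lambda_{\min}(X^{1/2})$ and $\lambda_{\min}(Y^{1/2})$. Applying the inequality twice,
\begin{align*}
\bigl\|X^{1/2}\, \hat{A}^{p+1}\, Y^{1/2}\bigr\|_F
\ge \lambda_{\min}(X^{1/2})\,\bigl\|\hat{A}^{p+1} Y^{1/2}\bigr\|_F
\ge \lambda_{\min}(X^{1/2})\lambda_{\min}(Y^{1/2})\,\|\hat{A}^{p+1}\|_F,
\end{align*}
and squaring gives the desired lower bound.

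The only subtlety I anticipate is the lower bound, specifically justifying $\|NM\|_F \ge \sigma_{\min}(N)\|M\|_F$ when $N$ is only PSD (possibly singular). If $X$ or $Y$ is singular then $\lambda_{\min}$ is zero and the lower bound is trivially zero, so the inequality still holds; when $X,Y \succ 0$, $\sigma_{\min}$ is strictly positive and the standard column-by-column argument (applied to $N X^{1/2}$ via $X^{1/2} = (X^{1/2})^\top$ and using $\|NM\|_F^2 = \sum_i \|N m_i\|_2^2 \ge \sigma_{\min}(N)^2 \|M\|_F^2$) delivers the bound cleanly. Once these ingredients are in place the corollary follows in a few lines.
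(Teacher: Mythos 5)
Your proposal is correct and follows essentially the same route as the paper: starting from \eqref{eq:Jdiff}, using the cyclic property of the trace and the series representation of the Lyapunov solution $X$ to obtain \eqref{eq:JinfDifference}, rewriting the trace as $\bigl\|X^{1/2}\hat{A}^{p+1}Y^{1/2}\bigr\|_F^2$, and then applying Frobenius-norm submultiplicativity for the upper bound and the $\lambda_{\min}$ inequality for the lower bound. Your extra care in justifying the lower bound when $X^{1/2}$ or $Y^{1/2}$ is singular is a welcome refinement the paper glosses over, but it does not change the argument.
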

\begin{proof}
    The difference between the optimal finite preview and noncausal costs is given in Equation~\ref{eq:Jdiff}.
    This expression can be rewritten as follows:
    \begin{align*}    
        J_{\infty,p}^*- J_{\infty,nc}^*= 
        \mbox{trace}\left[ Y  (\hat{A}^\top)^{p+1}
        \left( \sum_{j=0}^\infty (\hat{A}^\top)^j P B_w B_w^\top P \hat{A}^j \right) \hat{A}^{p+1} \right]. 
    \end{align*}
    The summation in parentheses is, by definition, equal to $X$ and this yields Equation~\ref{eq:JinfDifference}.
    Next, rewrite this expression in terms of the Frobenius norm:
    \eqref{eq:JinfDifference}:
    \begin{align*}
         J_{\infty,p}^*-J_{\infty,nc}^*
           =\bigl\|X^{1/2}(\hat{A})^{p+1}Y^{1/2}\bigr\|_F^2.
    \end{align*}
    We obtain the upper bound in \eqref{eq:JinfErrorBound} using the submultiplicative property of the Frobenius norm: $\|PQ\|_F \le \|P\|_F \, \|Q\|_F$ for any two matrices $P$ and $Q$ of appropriate dimensions.
    We obtain the lower bound in \eqref{eq:JinfErrorBound} 
    using the following property:
    $\| PQ\|_F \ge \lambda_{\text{min}}(P)\|Q\|_F$ where $P\succeq 0$ and $Q$ is any matrix of appropriate dimension.
\end{proof}

The spectral radius formula gives the asymptotic
approximation $\lim_{k\to \infty} \|\hat{A}^k \|^{\frac{1}{k}} = \rho(\hat{A})$. Thus the asymptotic bounds in \eqref{eq:JinfErrorBound} as $p\to \infty$ are approximated by:
\begin{align}
c_1  \, \rho\left( \hat{A}\right)^{2(p+1)} 
\lesssim J_{\infty,p}^*-J_{\infty,nc}^* \lesssim    
c_2\,  \rho\left( \hat{A}\right)^{2(p+1)} 
\end{align}
where $c_1:= \lambda_{\text{min}}(X)\lambda_{\text{min}}(Y)$
and $c_2 := \mbox{trace}(X) \mbox{trace}(Y)$ are constants independent of $p$.  This implies  that the finite preview cost converges to the noncausal cost geometrically with $p$.

\section{Example}
\label{sec:example}
This section presents two numerical examples to illustrate the proposed stochastic LQR framework with disturbance preview. 
The first example considers a linear time-invariant (LTI) system and is included in the journal version of the paper to demonstrate the basic structure and properties of the finite-horizon and infinite-horizon solutions. 
The second example considers a linear time-varying (LTV) mass–spring–damper system with time-varying dynamics and cost parameters. This second example illustrates the applicability of the proposed approach to time-varying settings.

\subsection{Boeing 747}
Consider a simplified LTI model for the longitudinal dynamics of a Boeing 747 linearized at one steady, level flight condition (Problem 17.6 in \cite{Boyd_Vandenberghe_2018}):
\begin{equation}\label{eq 16}
x_{t+1}=A \, x_t+B_u \, u_t + w_t
\end{equation}
where the state matrices are:
\begin{align*}    
A=\bsmtx
0.99 & 0.03 & -0.02 & -0.32 \\
0.01 & 0.47 & 4.7 & 0 \\
0.02 & -0.06 & 0.4 & 0 \\
0.01 & -0.04 & 0.72 & 0.99
\esmtx \mbox{ and }
B_u =\bsmtx
0.01 & 0.99 \\
-3.44 & 1.66 \\
-0.83 & 0.44\\
-0.47 & 0.25
\esmtx.
\end{align*}
We consider the IH and FH stochastic LQR problems with constant cost matrices $Q=I$ and $R=I$ at each step. This example was used in prior work on competitive ratio
\cite{goel22TAC} and additive regret \cite{sabag21ACC,sabag21arXiv}.


Figure~\ref{fig:FHCost} shows the performance achieved
by the optimal FH controller on the horizon $T=100$ for previews $p=5$ and 20 (solid lines). A closed-loop simulation was performed with a random disturbance. The plot shows the per-step cost averaged from time 0 to time $t$ versus the time $t$. This is compared against the performance of the optimal $H_2$ controller computed on the IH (dashed lines). This $H_2$ controller is computed using a system augmented with delays to account for the disturbance preview as in \cite{Ting2021WindTunnelStudy, MoeljaMeinsma2006H2Control, Yim2011DesignOfPreviewController, ROH1999StochasticOptimalPreviewControl, Louam1988OptimalControl, Tomizuka1975OptimalDiscreteFinitePreviewProblems}. This full information $H_2$ controller is computed using \texttt{h2syn} in Matlab. The optimal FH controller and IH $H_2$ controller have similar performance for $t \le 60$. In fact, the FH and IH gains are similar at the beginning of the simulation. However, the FH gains deviate significantly from the IH gains near the end of the horizon.  The FH gains 
are optimal for the horizon and hence the FH controller yields improved performance (lower average cost) by the end of the horizon.


\begin{figure}[ht]
 \centering
    \includegraphics[width=0.47\textwidth]{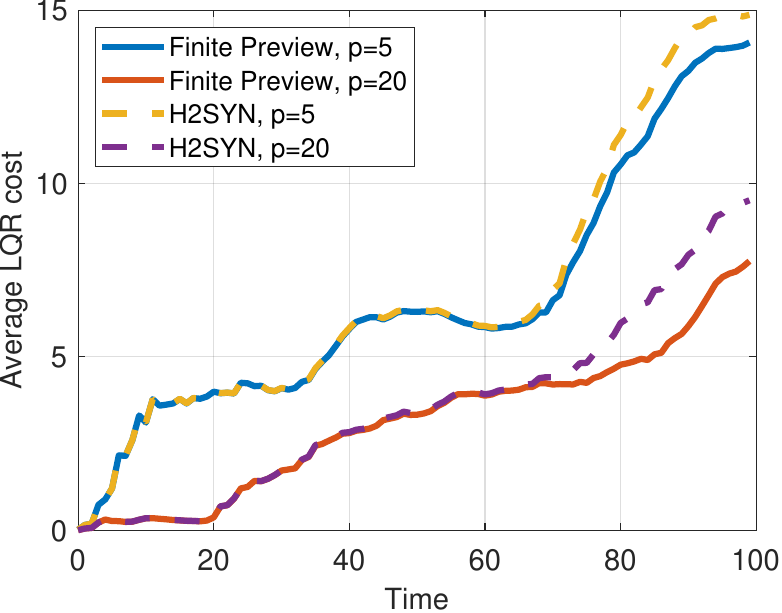}
    \caption{Comparison of average cost obtained by optimal FH controller and optimal $H_2$ controller computed via an augmented system.}
     \label{fig:FHCost}
\end{figure}

\begin{figure}[ht]
 \centering
    \includegraphics[width=0.47\textwidth]{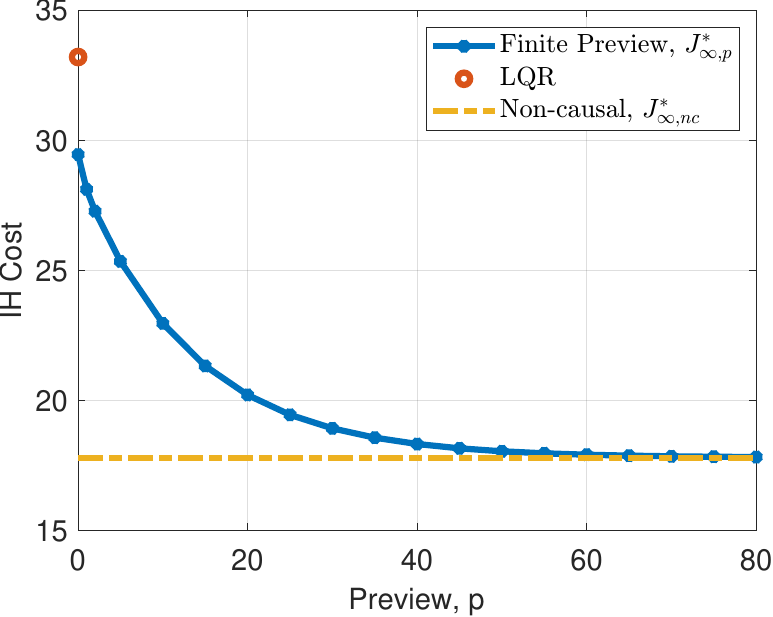}
    \caption{Comparison of IH cost versus preview horizon $p$.}
    \label{fig:IHCost}
\end{figure}

Figure~\ref{fig:IHCost} shows the optimal IH cost versus the preview horizon $p$ (blue solid).
The cost achieved by the finite preview controller is computed using the expression in Equation~\ref{eq:JinfFinal}.  The plot also shows the cost achieved by the Linear Quadratic Regulator (LQR) state feedback: $u_t = -K_x x_t$ where $K_x=H^{-1} B_u^\top P A$ is the same gain as used in the finite preview controller. The LQR state feedback achieves a cost equal to trace$[PB_wB_w^\top] \approx 33.2$ (red circle).  The full information controller $u_t = -K_x x_t - K_w w_t$ corresponds to the case of $p=0$.  This improves on the LQR performance and reduces the cost to $\approx 29.4$. Adding disturbance preview $(p>1)$ further decreases the cost. In fact, the cost monotonically decreases with increasing preview. 

Figure~\ref{fig:IHCost} also shows the performance of the optimal noncausal controller (Theorem~\ref{thm:IHNoncausal}) for comparison.  The cost for the optimal noncausal controller depends on the specific disturbance input. The yellow dashed line in Figure~\ref{fig:IHCost} corresponds to the expected (average) cost of the noncausal controller assuming white noise disturbances. This was computed using the expression given in Theorem~\ref{thm:CostConv}.
The noncausal controller has full knowledge of the disturbance and yields the lowest possible cost of $\approx 17.8$.  The performance of the finite preview controller converges, as $p\to \infty$, to the performance of the optimal noncausal controller. This is expected based on  
the discussion in Section~\ref{sec:NCresult}.

Finally, Figure~\ref{fig:IHRelativeCost} shows the optimal IH relative error of the cost for the finite preview controller compared to the cost for the optimal noncausal controller. This is normalized by the noncausal cost and plotted on a logarithmic scale against the preview horizon $p$ (blue solid). The plot uses the same cost results as in Figure~\ref{fig:IHCost}. The plot is linear on a log scale in agreement with the geometric convergence given in Corollary ~\ref{cor:CostBount}. The slope on the log scale should be approximately $2\log\rho(A)$. as $p\to \infty$.

\begin{figure}[ht]
 \centering
    \includegraphics[width=0.47\textwidth]{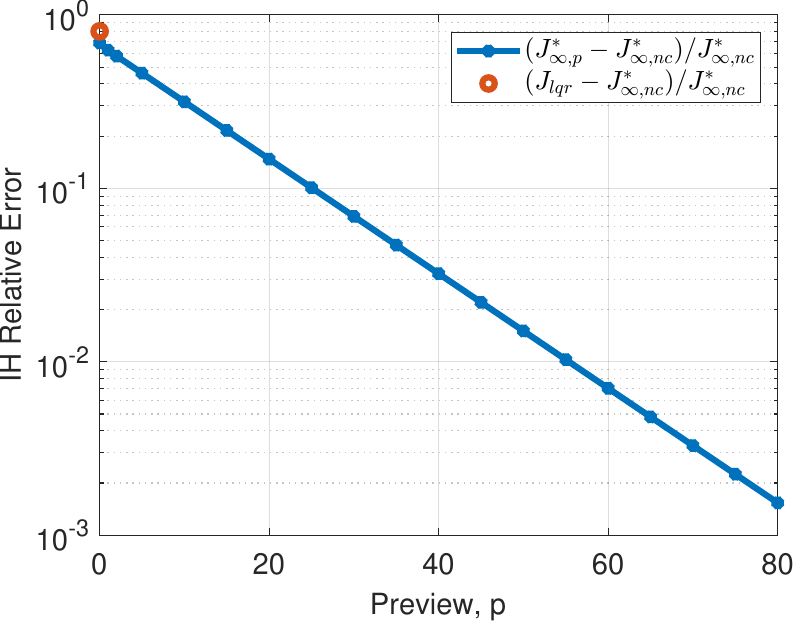}
    \caption{Comparison of IH relative error on a log scale versus preview horizon $p$.}
    \label{fig:IHRelativeCost}
\end{figure}

\subsection{Mass-spring-damper}

Consider a time-varying mass--spring--damper system with fixed damping ratio $\zeta$ and time-varying natural frequency $\omega_n(k)$. 
The system is modeled directly in discrete time as a second-order linear time-varying system
\begin{equation}\label{eq:LTVmsd_dt}
x_{k+1} = A_k x_k + B_k u_k + E_k w_k, \qquad k = 0,1,\ldots,T-1,
\end{equation}
where $x_k = [\,q_k\;\; \dot{q}_k\,]^\top$ denotes the displacement and velocity.
The time-varying system matrices are given by
\[
A_k =
\begin{bmatrix}
1 & T_s \\
- T_s \omega_n(k)^2 & 1 - 2 T_s \zeta \omega_n(k)
\end{bmatrix},
\qquad
B_k = E_k =
\begin{bmatrix}
0 \\
T_s
\end{bmatrix}.
\]
This model corresponds to an Euler discretization of an equivalent continuous-time, mass-spring-damper system with sample time $T_s$. The simulation uses $T_s=0.02$s and
$\zeta=0.7$.  Moreover, the natural frequency is given by:
\begin{align}
    w_n(k) = 1.5 + 0.5  \sin\left(
    \frac{2\pi T_s k}{10} \right);  
\end{align}
Finally, the disturbance $w_k$ is modeled as i.i.d.\ Gaussian noise.

We consider a finite-horizon stochastic LQR problem with time-varying state costs.
Specifically, the input cost is fixed as $R_k = 0.1$, while the state cost $Q_k$ is chosen as $\mathrm{diag}(10,1)$ in the first half of the horizon and $\mathrm{diag}(50,1)$ in the second half. This penalizes the displacement more heavily later in the horizon.
We compute the optimal finite-horizon preview controller for several preview horizons $p$ and evaluate its performance using closed-loop simulations.

Figure~\ref{fig:LTVFHCost} shows the average per-step cost from step $0$ to step $k$ versus the step index $k$ for preview horizons $p\in\{1,5,20\}$. The performance of the optimal noncausal controller is also shown.
As expected, increasing the preview horizon leads to improved performance (lower average cost). In particular, the performance with $p=20$ is already very close to that of the noncausal controller. The noncausal controller, which has access to full disturbance information, achieves the lowest cost and serves as a benchmark.

\begin{figure}[ht]
 \centering
    \includegraphics[width=0.47\textwidth]{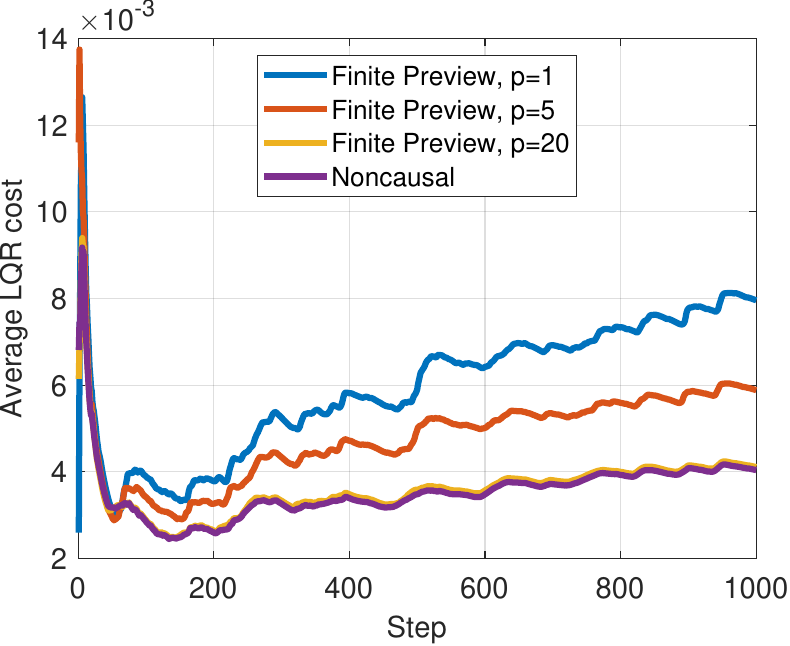}
    \caption{Comparison of average cost obtained by optimal FH controller and optimal noncausal controller in LTV system.}
    \label{fig:LTVFHCost}
\end{figure}

\section{Conclusion}
This paper presented solutions to the discrete-time, stochastic LQR problem with $p$ steps of disturbance preview information where $p$ is finite. Solutions were presented for both the finite horizon and infinite horizon problems.  Moreover, we provided a proof for the principle of optimality that relies only on the assumption of  nested information structure.  This is of independent interest.  Finally, we compared our solutions to several existing results for optimal control with preview information. Future work will explore using our finite-preview controller as a baseline for regret-based design.
Additionally, we will consider extensions to cases where the dynamic models and cost matrices are time-varying and only known with finite preview, in contrast to the full future knowledge assumed in this paper.

\section{Acknowledgments}

This material is based upon work supported by the National Science Foundation under Grant No. 2347026. Any opinions, findings, and conclusions or recommendations expressed in this material are those of the author(s) and do not necessarily reflect the views of the National Science Foundation.

\bibliographystyle{IEEEtran}
\bibliography{reference}

\end{document}